\newtheorem{thm}{Theorem}[section]
\newtheorem{lem}[thm]{Lemma}
\numberwithin{equation}{section}
\begin{document}

\title[a Type of Permutation Trinomials over Finite Fields] {Determination of a Type of Permutation Trinomials over Finite Fields}

\author[Xiang-dong Hou]{Xiang-dong Hou*}
\address{Department of Mathematics and Statistics,
University of South Florida, Tampa, FL 33620}
\email{xhou@usf.edu}
\thanks{* Research partially supported by NSA Grant H98230-12-1-0245.}

\keywords{discriminant, finite field, permutation polynomial}

\subjclass[2000]{11T06, 11T55}

\begin{abstract}
Let $f=a{\tt x} +b{\tt x}^q+{\tt x}^{2q-1}\in\Bbb F_q[{\tt x}]$. We find explicit conditions on $a$ and $b$ that are necessary and sufficient for $f$ to be a permutation polynomial of $\Bbb F_{q^2}$. This result allows us to solve a related problem. Let $g_{n,q}\in\Bbb F_p[{\tt x}]$ ($n\ge 0$, $p=\text{char}\,\Bbb F_q$) be the polynomial defined by the functional equation $\sum_{c\in\Bbb F_q}({\tt x}+c)^n=g_{n,q}({\tt x}^q-{\tt x})$. We determine all $n$ of the form $n=q^\alpha-q^\beta-1$, $\alpha>\beta\ge 0$, for which $g_{n,q}$ is a permutation polynomial of $\Bbb F_{q^2}$. 
\end{abstract}

\maketitle

\tableofcontents
\section{Introduction}

Let $\Bbb F_q$ denote the finite field with $q$ elements. A polynomial $f\in\Bbb F_q[{\tt x}]$ is called a permutation polynomial (PP) of $\Bbb F_q$ if the mapping $x\mapsto f(x)$   
is a permutation of $\Bbb F_q$. Permutation polynomials over finite fields are studied for both theoretic \cite{Eva,Hir71,Hir75,LN,Nie-Rob} 
and practical \cite{Gol-Mor,Lai07,Lev-Bra,Lev-Cha} reasons. PPs with few terms (excluding monomials) are particularly sought after \cite{Akb-Wan06, Lee-Par97, Mas-Pan-Wan06, Mas-Zie09, Tur88, Wan87, Wan94, Zie09}.

In the present paper we consider trinomials of the form $f=a{\tt x}+b{\tt x}^q+{\tt x}^{2q-1}\in\Bbb F_q[{\tt x}]$. Since $f\equiv(a+b+1){\tt x}\pmod{{\tt x}^q-{\tt x}}$, $f$ is a PP of $\Bbb F_q$ if and only if $a+b+1\ne 0$. The question that we are interested in is when $f$ is a PP of $\Bbb F_{q^2}$. This question will be completely answered in Theorem A (for odd $q$) and Theorem B (for even $q$). Partial solutions to the question appeared in two recent papers: PPs of $\Bbb F_{q^2}$ the form $t{\tt x}+{\tt x}^{2q-1}$ ($t\in\Bbb F_{q^*}$) and of the form $-{\tt x}+t{\tt x}^q+{\tt x}^{2q-1}$ ($t\in\Bbb F_{q^*}$) were determined in \cite{Hou1} and \cite{Hou2}, respectively.  For the proofs of the Theorems A and B, we draw on the methods of \cite{Hou1} and \cite{Hou2}, especially, the approach of \cite{Hou2}. However, the proofs in the present paper are much more than a routine adaptation of the ones in \cite{Hou1,Hou2}. We find a new method for proving the uniqueness of a solution $x\in\Bbb F_{q^2}$   
of the equation $ax+bx^q+x^{2q-1}=y$, where $y\in\Bbb F_{q^2}$. A common theme throughout the proofs of Theorems A and B is that complicated computations that appear to be heading nowhere can produce surprisingly nice results. For example, a seemingly out-of-control polynomial of degree 4 not only factors but factors exactly the way we desire; see \eqref{3.12}. 

Theorem A  provides a solution to a related problem. For each integer $n\ge 0$, let $g_{n,q}\in\Bbb F_q[{\tt x}]$ ($p=\text{char}\,\Bbb F_q$) be the polynomial defined by the functional equation
\[
\sum_{c\in\Bbb F_q}({\tt x}+c)^n=g_{n,q}({\tt x}^q-{\tt x}).
\]
The permutation property of the polynomial $g_{n,q}$ was the focus of several recent papers \cite{FHL,Hou11, Hou12}. These studies have led to the discovery of many new interesting PPs including the ones in \cite{Hou1,Hou2} and in the present paper. The ultimate goal concerning $g_{n,q}$ is to determine all triples of integers $(n,e;q)$ for which $g_{n,q}$ is a PP of $\Bbb F_{q^e}$; we call such triples {\em desirable}. While this goal may be out of reach for the time being, significant progress has been made. It was observed through computer search that many desirable triples appear in the form $(q^\alpha-q^\beta-1,2;q)$, where $\alpha>\beta\ge 0$. However, the chaotic values of those $\alpha$ and $\beta$ were quite bewildering; see \cite[Section 5 and Table 1]{FHL}. In the present paper, we are able to determine all desirable triples of this form; the results are stated in Theorems C (for even $q$) and D (for odd $q$). 
Theorem~C is an immediate consequence of some existing results. For Theorem~D, we note that
when $n=q^\alpha-q^\beta-1$, the polynomial $g_{n,q}$, modulo ${\tt x}^{q^2}-{\tt x}$, can be transformed through an invertible change of variable into the form $A{\tt x}+B{\tt x}^q+C{\tt x}^{2q-1}$. Hence Theorem D follows from Theorem A.


\section{Statements of Theorems A and B}

The main results of the paper are the following theorems.

\medskip
\noindent{\bf Theorem A.} {\em Let $f=a{\tt x}+b{\tt x}^q+{\tt x}^{2q-1}\in\Bbb F_q[{\tt x}]$, where $q$ is odd. Then $f$ is a PP of $\Bbb F_{q^2}$ if and only if one of the following is satisfied.

\begin{itemize}
  \item [(i)] $a(a-1)$ is a square in $\Bbb F_q^*$, and $b^2=a^2+3a$.
  \item [(ii)] $a=1$, and $b^2-4$ is a square in $\Bbb F_q^*$.
  \item [(iii)] $a=3$, $b=0$, $q\equiv -1\pmod 6$.
  \item [(iv)] $a=b=0$, $q\equiv 1,3\pmod 6$.
\end{itemize} 
}
\medskip
  
\medskip
\noindent{\bf Theorem B.} {\em Let $f=a{\tt x}+b{\tt x}^q+{\tt x}^{2q-1}\in\Bbb F_q[{\tt x}]$, where $q$ is even. Then $f$ is a PP of $\Bbb F_{q^2}$ if and only if one of the following is satisfied.

\begin{itemize}
  \item [(i)] $q>2$, $a\ne 1$, $\text{\rm Tr}_{q/2}(\frac 1{a+1})=0$, $b^2=a^2+a$.
  \item [(ii)] $q>2$, $a=1$, $b\ne 0$, $\text{\rm Tr}_{q/2}(\frac 1b)=0$.
\end{itemize} 
}
\medskip

In Theorem~B (i), we can write $\frac 1{a+1}=d^2+d^4$, where $d\in\Bbb F_q\setminus \Bbb F_2$. Then $(a,b,1)=\frac 1{d^2+d^4}(1+d^2+d^4,1+d+d^2,d^2+d^4)$. Similarly, in Theorem~B (ii), we can write $\frac 1b=d+d^2$, $d\in\Bbb F_q\setminus\Bbb F_2$. Then $(a,b,1)=\frac 1{d+d^2}(d+d^2,1,d+d^2)$. Let $\text{PG}(2,\Bbb F_q)$ denote the projective plane over $\Bbb F_q$ and define
\[
\mathcal X=\{[a:b:c]\in \text{PG}(2,\Bbb F_q): a{\tt x}+b{\tt x}^q+c{\tt x}^{2q-1}\ \text{is a PP of $\Bbb F_{q^2}$}\}.
\]
Then for even $q$ we have
\[
\begin{split}
\mathcal X=\,&\bigl\{[1+d^2+d^4:1+d+d^2:d^2+d^4]:d\in\Bbb F_q\setminus\Bbb F_2\bigr\}\cr
&\cup  \bigl\{[d+d^2:1:d+d^2]:d\in\Bbb F_q\setminus\Bbb F_2\bigr\}\cr
&\cup  \bigl\{[d:1:0]:d\in\Bbb F_q\setminus\{1\}\bigr\}\cr
&\cup  \bigl\{[1:0:0]\bigr\}.
\end{split}
\]


\section{Proof of Theorem A}


\subsection{The case $a(a-1)b=0$}\label{s3.1}\

\medskip
Let $f=a{\tt x}+b{\tt x}^q+{\tt x}^{2q-1}\in\Bbb F_q[{\tt x}]$, where $q$ is odd. We first prove Theorem~A under the assumption $a(a-1)b=0$.

\medskip
{\bf Case 1.} Assume $a=b=0$. Then $f={\tt x}^{2q-1}$ is a PP of $\Bbb F_{q^2}$ if and only if $\text{gcd}(2q-1,q^2-1)=1$, i.e., $q\equiv 1,3\pmod 6$.

\medskip
{\bf Case 2.} Assume $a\ne 0$, $b=0$. By \cite[Theorem~1.1]{Hou1}, $f=a{\tt x}+{\tt x}^{2q-1}$ is a PP of $\Bbb F_{q^2}$ if and only if one of the following occurs:

\begin{itemize}
  \item [(a)] $a=1$, $q\equiv 1\pmod 4$;
  \item [(b)] $a=-3$, $q\equiv\pm1\pmod{12}$;
  \item [(c)] $a=3$, $q\equiv -1\pmod 6$.
\end{itemize}  
Condition (c) is (iii) in Theorem~A; condition (a) is equivalent to (ii) in Theorem~A with $b=0$. Note that $3$ is square in $\Bbb F_q^*$ if and only if $q\equiv\pm1\pmod{12}$ \cite[\S5.2]{Ire-Ros}. Hence condition (b) is equivalent to (i) in Theorem~A with $b=0$.

\medskip
{\bf Case 3.} Assume $a=0$, $b\ne 0$. For integers $\alpha,\beta\ge 0$ with $\alpha+\beta=q-1$, it follows from \eqref{3.16} that
\[
\sum_{x\in\Bbb F_{q^2}}f(x)^{\alpha+\beta q}=-\sum_{\substack{k,l\cr \alpha+1+k-l=0,\,q+1}}\binom\alpha k\binom\beta l b^{-(k+l)}.
\]
Setting $\alpha=q-1$ and $\beta=0$, we have  
\[
\sum_{x\in\Bbb F_{q^2}}f(x)^{q-1}=-\binom{q-1}1b^{-1}=b^{-1}\ne 0.
\]
By Hermite's criterion \cite[Lemma 7.3]{LN}, $f$ cannot be a PP of $\Bbb F_{q^2}$ in this case.

\medskip
{\bf Case 4.} Assume $a=1$. We show that $f={\tt x}+b{\tt x}^q+{\tt x}^{2q-1}$ is a PP of $\Bbb F_{q^2}$ if and only if $f_1={\tt x}^2+b{\tt x}+1$ has two distinct roots in $\Bbb F_q$.

($\Leftarrow$) Let $x,y\in\Bbb F_{q^2}$ such that $f(x)=y$. We show that $x$ is uniquely determined by $y$.

First assume $y\ne 0$. Let $t=xy=x^2+x^{2q}+bx^{q+1}\in\Bbb F_q$. Then 
\[
\frac ty+b\Bigl(\frac ty\Bigr)^q+\Bigl(\frac ty\Bigr)^{2q-1}=y,
\]
i.e.,
\[
t\Bigl(\frac 1y+\frac b{y^q}+\frac 1{y^{2q-1}}\Bigr)=y.
\]
Hence $t$ is unique. It follows that $x$ is unique.

Nest assume $y=0$. We claim that $x=0$. Assume to the contrary that $x\ne 0$. Then we have
\begin{equation}\label{3.1.0}
1+bx^{q-1}+x^{2(q-1)}=0,
\end{equation}
i.e.,
$f_1(x^{q-1})=0$. Thus $x^{q-1}\in\Bbb F_q$. Therefore $x^{2(1-q)}=x^{(q-1)^2}=1$, and hence $x^{q-1}=\pm1$. It follows from \eqref{3.1.0} that $b=\pm 2$. Thus $f_1=({\tt x}\pm1)^2$, which is a contradiction.

($\Rightarrow$) Assume to the contrary that $f_1$ does not have two distinct roots in $\Bbb F_q$. 

If $f_1$ is irreducible over $\Bbb F_q$, let $y\in\Bbb F_{q^2}$ be a root of $f_1$. Since $y^{1+q}=\text{N}_{q^2/q}(y)=1$, there exists an $x\in\Bbb F_{q^2}^*$ such that $y=x^{q-1}$. Then $f(x)=xf_1(y)=0=f(0)$, which is a contradiction.

If $f_1$ is not irreducible over $\Bbb F_q$, then $f_1=({\tt x}-\epsilon)^2$, where $\epsilon=1$ or $-1$. Since $\epsilon^{1+q}=1$, again there exists $x\in\Bbb F_{q^2}^*$ such that $\epsilon=x^{q-1}$. Then $f(x)=xf_1(\epsilon)=0=f(0)$, which is a contradiction.

This completes the proof of Theorem~A under the assumption $a(a-1)b=0$. In the next two subsections, we assume that $a(a-1)b\ne 0$ and we prove that $f$ is a PP of $\Bbb F_{q^2}$ if and only if $a(a-1)$ is a square in $\Bbb F_q^*$ and $b^2=a^2+3a$.


\subsection{The case $a(a-1)b\ne 0$, sufficiency}\label{s3.2}\

\medskip
Assume that $a(a-1)$ is a square in $\Bbb F_q^*$ and $b^2=a^2+3a$.

\medskip

$1^\circ$ We claim that $f(\Bbb F_{q^2}\setminus\Bbb F_q)\subset\Bbb F_{q^2}\setminus\Bbb F_q$.

Assume to the contrary that there exists an $x\in\Bbb F_{q^2}\setminus\Bbb F_q$ such that $f(x)^q=f(x)$. Then 
\[
ax^q+bx+x^{2-q}=ax+bx^q+x^{2q-1},
\]
i.e.,
\[
(a-b)(x^q-x)+\frac{x^3-x^{3q}}{x^{1+q}}=0.
\]
Since $x^q-x\ne 0$, we have
\[
a-b-\frac{x^2+x^{1+q}+x^{2q}}{x^{1+q}}=0,
\]
i.e.,
\begin{equation}\label{3.1}
x^{2(q-1)}-(a-b-1)x^{q-1}+1=0.
\end{equation}
Using the relation $b^2=a^2+3a$, we find that
\[
(a-b-1)^2-4=\frac{a-1}a(a-b)^2,
\]
which is a square in $\Bbb F_q^*$. So ${\tt x}^2-(a-b-1){\tt x}+1$ is reducible over $\Bbb F_q$. Thus by \eqref{3.1}, we have $x^{q-1}\in\Bbb F_q$. Then $1=x^{(q-1)^2}=x^{q^2-2q+1}=x^{2(1-q)}$. Since $x\notin \Bbb F_q$, we must have $x^{1-q}=-1$. Then \eqref{3.1} becomes $a-b+1=0$. However, we have 
\begin{equation}\label{3.2}
(a+b+1)(a-b+1)=(a+1)^2-b^2=(a+1)^2-(a^2+3a)=1-a\ne 0,
\end{equation}
which is a contradiction.

\medskip
$2^\circ$ Let $x,y\in\Bbb F_{q^2}$ such that $f(x)=y$. We show that $x$ is uniquely determined by $y$. 

If $y\in\Bbb F_q$, by $1^\circ$, we have $x\in \Bbb F_q$, so $f(x)=(a+b+1)x$. By \eqref{3.2}, $a+b+1\ne 0$, so we must have $x=\frac y{a+b+1}$.

Therefore, we assume $y\in\Bbb F_{q^2}\setminus\Bbb F_q$. It follows that $x\in \Bbb F_{q^2}\setminus\Bbb F_q$.

\medskip
$3^\circ$ We write $\text{T}=\text{Tr}_{q^2/q}$ and $\text{N}={\text N}_{q^2/q}$. It suffices to show that $\text{T}(x)$ and $\text{N}(x)$ are uniquely determined by $y$. (If $x_1\in\Bbb F_{q^2}\setminus\Bbb F_q$ such that $f(x_1)=y$ and $\text{T}(x_1)=\text{T}(x)$, $\text{N}(x_1)=\text{N}(x)$, then $x_1=x$ or $x^q$. Since $f(x^q)=f(x)^q=y^q\ne y$, we must have $x_1=x$.)

We have 
\begin{equation}\label{3.3}
\begin{cases}
\text{T}(f(x))=\text{T}(y),\cr
\text{N}(f(x))=\text{N}(y),
\end{cases}
\end{equation}
where
\begin{equation}\label{3.4}
\text{T}(f(x))=(a+b)\text{T}(x)+\text{T}(x^{2q-1}),
\end{equation}
\begin{equation}\label{3.5}
\begin{split}
\text{N}(f(x))\,&=(ax+bx^q+x^{2q-1})(ax^q+bx+x^{2-q})\cr
&=(a^2+b^2+1)\text{N}(x)+(ab+b)\text{T}(x^2)+a\text{T}(x^{3-q}).
\end{split}
\end{equation}
We wish to express $\text{T}(f(x))$ and $\text{N}(f(x))$ in terms of $\text{T}(x)$ and $\text{N}(x)$. For this purpose, we need a few formulas: For $z\in\Bbb F_{q^2}^*$, we have
\begin{equation}\label{3.6}
\text{T}(z^2)=\text{T}(z)^2-2\text{N}(z),
\end{equation}
\[
\text{T}(z^3)=\text{T}(z)^3-2\text{N}(z)\text{T}(z),
\]
\begin{equation}\label{3.7}
\text{T}(z^{2q-1})=\text{T}(z^{3q}\cdot z^{-(1+q)})=\frac{\text{T}(z^3)}{\text{N}(z)}=\frac{\text{T}(z)^3}{\text{N}(z)}-3\text{T}(z),
\end{equation}
\begin{equation}\label{3.8}
\begin{split}
\text{T}(z^{3-q})\,&=\text{T}(z^4\cdot z^{-(1+q)})=\frac{\text{T}(z^4)}{\text{N}(z)}\cr
&=\frac 1{\text{N}(z)}\bigl[\text{T}(z^2)^2-2\text{N}(z^2)\bigr]\cr
&=\frac 1{\text{N}(z)}\bigl[\bigl(\text{T}(z)^2-2\text{N}(z)\bigr)^2-2\text{N}(z^2)\bigr]\cr
&=\frac {\text{T}(z)^4}{\text{N}(z)}-4\text{T}(z)^2+2\text{N}(z).
\end{split}
\end{equation}
Put $t=\text{T}(x)$, $n=\text{N}(x)$, $\tau=\text{T}(y)$, $\eta=\text{N}(y)$. By \eqref{3.4} -- \eqref{3.8}, we have 
\[
\text{T}(f(x))=(a+b)t+\frac{t^3}n-3t=\frac{t^3}n+(a+b-3)t,
\]
\[
\begin{split}
\text{N}(f(x))\,&=(a^2+b^2+1)n+(ab+b)(t^2-2n)+a\Bigl(\frac{t^4}n-4t^2+2n\Bigr)\cr
&=a\frac{t^4}n+(ab-4a+b)t^2+(a-b+1)^2n.
\end{split}
\]
Then \eqref{3.3} becomes
\begin{equation}\label{3.9}
\begin{cases}
\displaystyle \frac{t^3}n+(a+b-3)t=\tau,\vspace{2mm}\cr
\displaystyle a\frac{t^4}n+(ab-4a+b)t^2+(a-b+1)^2n=\eta.
\end{cases}
\end{equation}
We proceed to show that \eqref{3.9} has at most one solution $(t,n)\in\Bbb F_q\times \Bbb F_q$. 

First assume $\tau=0$. Since $y\in\Bbb F_{q^2}\setminus\Bbb F_q$, $q$ must be odd. We claim that $t=0$. If, to the contrary, $t\ne 0$, then by the first equation of \eqref{3.9}, we have $\frac{t^2}n=-(a+b-3)$. Using the relation $b^2=a^2+3a$, we find that 
\[
\frac{t^2}n\Bigl(\frac{t^2}n-4\Bigr)=(a+b+1)(a+b-3)=\frac{a-1}a(a+b)^2,
\]
which is a square in $\Bbb F_q$. Then $x\in\Bbb F_q$, which is a contradiction. So the claim is proved. By the second equation of \eqref{3.9}, we have $n=\frac{\eta}{(a-b+1)^2}$. Hence $(t,n)$ is unique,

Now assume $\tau\ne 0$. It follows that $t\ne 0$. Put $s=\frac{t^2}n$ and $\sigma=\frac{\tau^2}\eta$, and write \eqref{3.9} as
\begin{equation}\label{3.10}
\begin{cases}
t(s+a+b-3)=\tau,\vspace{2mm}\cr
\displaystyle t^2\Bigl(as+(ab-4a+b)+(a-b+1)^2\,\frac 1s\Bigr)=\frac{\tau^2}\sigma.
\end{cases}
\end{equation}
Eliminating $t$ and $\tau$ in \eqref{3.10}, we have
\[
\frac{as+(ab-4a+b)+(a-b+1)^2\frac 1s}{(s+a+b-3)^2}=\frac 1\sigma,
\]
i.e.,
\begin{equation}\label{3.11}
s^3+(-a\sigma+2a+2b-6)s^2+\bigl[(4a-b-ab)\sigma+(a+b-3)^2\bigr]s-(a-b+1)^2\sigma=0.
\end{equation}
It suffices to show that \eqref{3.11} has at most one solution $s\in\Bbb F_q$. Let $g({\tt s})\in\Bbb F_q[{\tt s}]$ denote the polynomial given by the left side of \eqref{3.11}. We find that the discriminant of $g$ is given by 
\begin{equation}\label{3.12}
D(g)=(a-1)^2\sigma(\sigma-4)h(\sigma),
\end{equation}
where
\begin{equation}\label{3.13}
h(\sigma)=a^2(b^2-4a)\sigma^2-2\bigl(ab(a+b)^2-8a^3-6a^2b-2b^3+9ab\bigr)\sigma+(a+b+1)(a+b-3).
\end{equation}
Here we emphasize that \eqref{3.12} and \eqref{3.13} hold with $a$ and $b$ treated as independent parameters. Using the relation $b^2=a^2+3a$, we find that 
\begin{equation}\label{3.14}
h(\sigma)=a^3(a-1)\Bigl(\sigma-\frac{2a^2+2ab-3b}{a^2}\Bigr)^2.
\end{equation}
(Note: Equations \eqref{3.12} and \eqref{3.14}, especially \eqref{3.12}, are painful to compute by hand, but they are easily obtained using a symbolic computation program.) By \eqref{3.12} and \eqref{3.14},
\begin{equation}\label{3.15}
D(g)=a^3(a-1)^3\sigma(\sigma-4)\Bigl(\sigma-\frac{2a^2+2ab-3b}{a^2}\Bigr)^2.
\end{equation}
In \eqref{3.15}, $\sigma(\sigma-4)$ is a nonsquare in $\Bbb F_q^*$ since $y\in\Bbb F_{q^2}\setminus\Bbb F_q$. If $\sigma\ne\frac{2a^2+2ab-3b}{a^2}$, then $D(g)$ is a nonsquare in $\Bbb F_q^*$. Therefore $g$ has at most one root in $\Bbb F_q$, and we are done.

\medskip
$4^\circ$ Now assume $\sigma=\frac{2a^2+2ab-3b}{a^2}$. We have
\[
\begin{split}
\sigma(\sigma-4)\,&=\Bigl(2+\frac{b(2a-3)}{a^2}\Bigr)\Bigl(-2+\frac{b(2a-3)}{a^2}\Bigr)=\frac{b^2(2a-3)^2}{a^4}-4\cr
&=\frac{a(a+3)(2a-3)^2}{a^4}-4=-\frac{27(a-1)}{a^3}.
\end{split}
\]
Since $\sigma(\sigma-4)\ne 0$ (a nonsquare in $\Bbb F_q^*$), we have $3\nmid q$. Using the relations $\sigma=\frac{2a^2+2ab-3b}{a^2}$ and $b^2=a^2+3a$, we find that
\[
g({\tt s})={\tt s}^3+\frac{3(-2a+b)}a{\tt s}^2+\frac{3(5a-4b+3)}a{\tt s}+\frac{-14a^2+13ab-18a+3b}{a^2}.
\]
Then
\[
g'=3{\tt s}^2+\frac{6(-2a+b)}a{\tt s}+\frac{3(5a-4b+3)}a.
\]
The discriminant of $g'$ is given by
\[
D(g')=\Bigl[\frac{2(-2a+b)}a\Bigr]^2-4\cdot\frac{5a-4b+3}a.
\]
Using the relation $b^2=a^2+3a$, we find that 
\[
D(g')=0.
\]
Thus we have 
\[
g'=3\Bigl({\tt s}+\frac{-2a+b}a\Bigr)^2.
\]
Since $D(g)=0$, $\text{gcd}(g,g')\ne 1$. Thus we must have 
\[
g=\Bigl({\tt s}+\frac{-2a+b}a\Bigr)^3.
\]
In particular, $g$ has a unique root in $\Bbb F_q$. This completes the proof of the sufficiency part of Theorem~A under the assumption $a(a-1)b\ne 0$. 


\subsection{The case $a(a-1)b\ne 0$, necessity}\label{s3.3}\

\medskip

Let $f=a{\tt x}+b{\tt x}^q+{\tt x}^{2q-1}\in\Bbb F_q$. Let $0\le s<q^2-1$ and write $s=\alpha+\beta q$, where $0\le \alpha,\beta\le q-1$. One has 
\[
\begin{split}
&\sum_{x\in\Bbb F_{q^2}}f(x)^s \kern 9cm \cr
=\,&\sum_{x\in\Bbb F_{q^2}^*}(ax+bx^q+x^{2q-1})^{\alpha+\beta q}\cr
=\,&\sum_{x\in\Bbb F_{q^2}^*}(ax+bx^q+x^{2q-1})^\alpha(ax^q+bx+x^{2-q})^\beta\cr
=\,&\sum_{x\in\Bbb F_{q^2}^*}\sum_{i,j,k,l}\binom\alpha i\binom ik(ax)^{\alpha-i}(bx^q)^{i-k}(x^{2q-1})^k\binom\beta j\binom jl(ax^q)^{\beta-j}(bx)^{j-l}(x^{2-q})^l\cr
=\,&\sum_{x\in\Bbb F_{q^2}^*}\sum_{i,j,k,l}\binom\alpha i\binom ik\binom\beta j\binom jl a^{\alpha+\beta-i-j}b^{i+j-k-l}x^{\alpha+\beta q+(q-1)(i+k-j-l)}.
\end{split}
\]
If $\alpha+\beta q\not\equiv 0\pmod{q-1}$, then clearly $\sum_{x\in\Bbb F_{q^2}}f(x)^s=0$. Assume $\alpha+\beta q\equiv 0\pmod{q-1}$. Then one must have $\alpha+\beta=q-1$, and the above calculation becomes
\[
\begin{split}
\sum_{x\in\Bbb F_{q^2}}f(x)^s\,&=\sum_{x\in\Bbb F_{q^2}^*}\sum_{i,j,k,l}\binom\alpha i\binom ik\binom\beta j\binom jl a^{q-1-i-j}b^{i+j-k-l}x^{(q-1)(q-\alpha+i+k-j-l)}\cr
&=-\sum_{\substack{i,j,k,l\cr q-\alpha+i+k-j-l\equiv 0\, (\text{mod}\, q+1)}}\binom\alpha i\binom ik\binom\beta j\binom jl a^{q-1-i-j}b^{i+j-k-l}.
\end{split}
\]
For $0\le k\le i\le\alpha$ and $0\le l\le j\le\beta$, one has 
\[
-(q+1)<q-\alpha+i+k-j-l<2(q+1).
\]
Hence
\begin{equation}\label{3.16}
\sum_{x\in\Bbb F_{q^2}}f(x)^s=-\sum_{q-\alpha+i+k-j-l= 0,\, q+1}\binom\alpha i\binom ik\binom\beta j\binom jl a^{q-1-i-j}b^{i+j-k-l}.
\end{equation}

Now assume that $a(a-1)b\ne 0$ and $f$ is a PP of $\Bbb F_{q^2}$. We proceed to prove that $a(a-1)$ is a square in $\Bbb F_q^*$ and $b^2=a^2+3a$. The proof relies on several lemmas which are provided after the proof.

Letting $\alpha=0$ and $\beta=q-1$ in \eqref{3.16}, one has
\begin{equation}\label{3.16.0}
\begin{split}
0\,&=-\sum_{x\in\Bbb F_{q^2}}f(x)^{(q-1)q}=\sum_{q-j-l=0}\binom{q-1}j\binom jl a^{-j}b^{j-l}\cr
&=\sum_{1\le l\le \frac q2}\binom{q-1}{q-l}\binom{q-l}l a^{-(q-l)}b^{q-2l}\cr
&=\frac ba\sum_{1\le l\le \frac q2}(-1)^{q-l}\binom{-l}la^lb^{-2l}\cr
&=-\frac ba\sum_{1\le l\le \frac q2}\binom{-l}l\Bigl(\frac{-a}{b^2}\Bigr)^l\cr
&=-\frac ba\biggl(\sum_{0\le l\le \frac q2}\binom{-l}l\Bigl(\frac{-a}{b^2}\Bigr)^l-1\biggr).
\end{split}
\end{equation}
Thus $\sum_{0\le l\le \frac q2}\binom{-l}l(\frac{-a}{b^2})^l=1$. By Lemma~\ref{L3.1}, $1+4\cdot\frac{-a}{b^2}$ is a square in $\Bbb F_q^*$, i.e., $b^2-4a$ is a square in $\Bbb F_q^*$. (Note that $b^2-4a=a(a-1)$ after we prove that $b^2=a^2+3a$.)

Next we prove that $b^2=a^2+3a$. Letting $\alpha=1$ and $\beta=q-2$ in \eqref{3.16}, one has 
\[
\begin{split}
&-\sum_{x\in\Bbb F_{q^2}}f(x)^{1+(q-2)q}\cr
=\,&\sum_{q-1-j-l=0}\binom{q-2}j\binom jl a^{-j}b^{j-l}+\sum_{q-j-l=0}\binom{q-2}j\binom jl a^{-1-j}b^{1+j-l}\cr
&+\sum_{q+1-j-l=0,\,q+1}\binom{q-2}j\binom jl a^{-1-j}b^{j-l}\cr
=\,&\sum_{1\le l\le\frac{q-1}2}\binom{q-2}{q-1-l}\binom{q-1-l}l a^{-(q-1-l)}b^{q-1-2l}\cr
&+\sum_{2\le l\le\frac q2}\binom{q-2}{q-l}\binom{q-l}l a^{-1-(q-l)}b^{1+q-2l}\cr
&+\sum_{3\le l\le\frac{q+1}2}\binom{q-2}{q+1-l}\binom{q+1-l}l a^{-1-(q+1-l)}b^{q+1-2l}+a^{-1}.
\end{split}
\]
Since $\binom{-2}k=(-1)^k(k+1)$ for $k\ge 0$, the above calculation gives
\[
\begin{split}
&-\sum_{x\in\Bbb F_{q^2}}f(x)^{1+(q-2)q}\cr
=\,&\sum_{1\le l\le\frac{q-1}2}(-1)^{q-1-l}(q-l)\binom{-1-l}la^lb^{-2l}+\sum_{2\le l\le\frac q2}(-1)^{q-l}(q-l+1)\binom{-l}l a^{l-2}b^{-2l+2}\cr
&+\sum_{3\le l\le\frac{q+1}2}(-1)^{q+1-l}(q+2-l)\binom{1-l}la^{l-3}b^{-2l+2}+a^{-1}\cr
=\,&\sum_{1\le l\le\frac{q-1}2}(l+1)\binom{-l}{l+1}(-1)^la^lb^{-2l}+\sum_{2\le l\le\frac q2}(l-1)\binom{-l}l(-1)^la^{l-2}b^{-2l+2}\cr
&+\sum_{3\le l\le\frac{q+1}2}(l-2)\binom{-(l-1)}l(-1)^{l+1}a^{l-3}b^{-2l+2}+a^{-1}.
\end{split}
\]
Put $z=\frac{-a}{b^2}$. Then one has 
\begin{equation}\label{3.17}
\begin{split}
&-\sum_{x\in\Bbb F_{q^2}}f(x)^{1+(q-2)q}\cr
=\,&\sum_{0\le l\le\frac{q-1}2}(l+1)\binom{-l}{l+1}z^l+\frac{b^2}{a^2}\sum_{0\le l\le\frac q2}(l-1)\binom{-l}lz^l+\frac{b^2}{a^2}\cr
&+\sum_{2\le l\le\frac{q-1}2}(l-1)\binom{-l}{l+1}(-1)^la^{l-2}b^{-2l}+a^{-1}\cr
=\,&\sum_{0\le l\le\frac{q-1}2}\bigl[l+1+(l-1)a^{-2}\bigr]\binom{-l}{l+1}z^l+\frac{b^2}{a^2}\sum_{0\le l\le\frac q2}(l-1)\binom{-l}l(-1)^lz^l+\frac{b^2}{a^2}+\frac 1a\cr
\end{split}
\end{equation}
\[
\begin{split}
=\,&(1+a^{-2})\sum_{0\le l\le\frac{q-1}2}(l+1)\binom{-l}{l+1}z^l-2a^{-2}\sum_{0\le l\le\frac{q-1}2}\binom{-l}{l+1}z^l\cr
&+\frac{b^2}{a^2}\sum_{0\le l\le\frac q2}(l+1)\binom{-l}lz^l-2\,\frac{b^2}{a^2}\sum_{0\le l\le\frac q2}\binom{-l}lz^l+\frac{b^2}{a^2}+\frac 1a\cr
=\,&(1+a^{-2})\frac{2z}{1+4z}-2a^{-2}+\frac{b^2}{a^2}\cdot\frac{1+3z}{1+4z}-2\,\frac{b^2}{a^2}+\frac{b^2}{a^2}+\frac 1a \kern 5mm\text{(by Lemmas~\ref{L3.1} -- \ref{L3.2})}\cr
=\,&\frac{2(a-1)(b^2-a^2-3a)}{a^2(b^2-4a)}.
\end{split}
\]
Since $f$ is a PP of $\Bbb F_{q^2}$, one has $\sum_{x\in\Bbb F_{q^2}}f(x)^{1+(q-2)q}=0$. Hence $b^2-a^2-3a=0$. This completes the proof of the necessity part of Theorem~A under the assumption $a(a-1)b\ne 0$.

\medskip
The following lemmas, used in the above proof, hold for all (odd and even) $q$.

\begin{lem}\label{L3.1}\cite[Lemma 5.1]{Hou2} Let $z\in\Bbb F_q^*$ and write ${\tt x}^2+{\tt x}-z=({\tt x}-r_1)({\tt x}-r_2)$, $r_1,r_2\in\Bbb F_{q^2}$. Then 
\[
\sum_{0\le l\le \frac q2}\binom{-l}l z^l=
\begin{cases}
\displaystyle \frac 12&\text{if}\ r_1=r_2\in\Bbb F_q,\vspace{1mm}\cr
1& \text{if}\ r_1,r_2\in\Bbb F_q,\ r_1\ne r_2,\cr
0&\text{if}\ r_1,r_2\notin\Bbb F_q.
\end{cases}
\]
\end{lem}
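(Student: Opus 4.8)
The plan is to reduce the lemma to a single closed-form evaluation of the truncated sum and then read off the three cases from the behaviour of a quadratic character (for odd $q$) or an additive trace (for even $q$). Write $S=\sum_{0\le l\le q/2}\binom{-l}{l}z^l\in\Bbb F_q$ and note that the roots satisfy $r_1+r_2=-1$, $r_1r_2=-z$, so the discriminant of ${\tt x}^2+{\tt x}-z$ is $(r_1-r_2)^2=1+4z$. For odd $q$ the three alternatives in the statement are exactly the cases $1+4z=0$, $1+4z$ a nonzero square, and $1+4z$ a nonsquare, so by Euler's criterion it suffices to prove the polynomial identity
\[
S=\tfrac12\bigl(1+(1+4z)^{(q-1)/2}\bigr)
\]
in $\Bbb F_q$; the three values $\tfrac12,1,0$ then drop out of $(1+4z)^{(q-1)/2}\in\{0,1,-1\}$.

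First I would establish this identity coefficientwise. Expanding the right-hand side by the binomial theorem, the claim is equivalent to $\binom{-l}{l}=\tfrac12\,4^l\binom{(q-1)/2}{l}$ for $1\le l\le(q-1)/2$ (the $l=0$ term matching trivially). Using $\binom{-l}{l}=(-1)^l\binom{2l-1}{l}=\tfrac12(-1)^l\binom{2l}{l}$, this becomes the congruence $\binom{2l}{l}\equiv(-4)^l\binom{(q-1)/2}{l}\pmod p$. The starting point is the rational identity $\binom{2l}{l}=(-4)^l\binom{-1/2}{l}$, both sides being $p$-integral for odd $p$, which reduces everything to $\binom{-1/2}{l}\equiv\binom{(q-1)/2}{l}\pmod p$.

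I expect this last congruence to be the main obstacle, because the range $0\le l\le(q-1)/2$ far exceeds $(p-1)/2$, so the naive reduction $-1/2\equiv(p-1)/2\pmod p$ is not enough by itself. The fix is a digit argument: the $p$-adic expansion of $-1/2$ has every digit equal to $(p-1)/2$, while $(q-1)/2=\sum_{i=0}^{m-1}\frac{p-1}2 p^i$ (where $q=p^m$) has the same digits in positions $0,\dots,m-1$ and $0$ beyond. Since any $l<q$ has no digits past position $m-1$, Lucas' theorem (in its $p$-adic form for $\binom{x}{l}$) gives $\binom{-1/2}{l}\equiv\prod_i\binom{(p-1)/2}{l_i}\equiv\binom{(q-1)/2}{l}\pmod p$ for all $l<q$, completing the odd case.

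For even $q$ the factor $\tfrac12$ is unavailable, so I would argue directly. Here ${\tt x}^2+{\tt x}-z={\tt x}^2+{\tt x}+z$ is always separable, so the double-root case is vacuous, and the two surviving cases are governed by Artin--Schreier: the roots lie in $\Bbb F_q$ if and only if $\text{Tr}_{q/2}(z)=0$. A Kummer/Lucas count shows that $\binom{-l}{l}\equiv\binom{2l-1}{l}\pmod 2$ is odd exactly when $l=0$ or $l$ is a power of $2$; hence, with $q=2^m$, only the exponents $l=0,1,2,4,\dots,2^{m-1}\le q/2$ survive and $S=1+\sum_{j=0}^{m-1}z^{2^j}=1+\text{Tr}_{q/2}(z)$. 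This equals $1$ when $\text{Tr}_{q/2}(z)=0$ and $0$ otherwise, matching the two admissible cases and finishing the proof.
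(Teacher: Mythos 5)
Your proof is correct, but it takes a genuinely different route from the one the paper relies on. In fact the paper does not prove Lemma~\ref{L3.1} at all: it cites \cite[Lemma 5.1]{Hou2}, and the method used there --- the same constant-term technique this paper employs for the sibling identities (Lemmas~\ref{L3.3}, \ref{L4.2} and \ref{LA1}) --- is to write $\binom{-l}{l}$ as the constant term of the Laurent series $1/\bigl({\tt x}^l(1+{\tt x})^l\bigr)$, sum the resulting finite geometric-type series in closed form, and extract the constant term by partial fractions in the roots $r_1,r_2$ of ${\tt x}^2+{\tt x}-z$. Your argument instead produces genuine closed forms for the sum: for odd $q$ you get $\frac12\bigl(1+(1+4z)^{(q-1)/2}\bigr)$, proved coefficientwise from the rational identity $\binom{2l}{l}=(-4)^l\binom{-1/2}{l}$ together with the $p$-adic Lucas congruence $\binom{-1/2}{l}\equiv\binom{(q-1)/2}{l}\pmod p$ (your digit argument correctly addresses the real issue here, namely that $l$ ranges far beyond $p$, so reducing $-1/2$ modulo $p$ alone would not suffice); for even $q$ you get $1+\text{Tr}_{q/2}(z)$ via the Kummer/Lucas parity count and Artin--Schreier theory. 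The three cases of the lemma then drop out of Euler's criterion, respectively the trace criterion, applied to the discriminant $1+4z$. What your approach buys is self-containedness and transparency: the answer is visibly a quadratic-character (resp.\ additive-character) evaluation of the discriminant. What the paper's constant-term machinery buys is uniformity: it extends mechanically to the weighted sums $\sum_l(l+1)\binom{-l}{l}z^l$ and $\sum_l\binom{l+2}{2}\binom{-l}{l}z^l$ needed in Lemmas~\ref{L3.3}, \ref{L4.2} and \ref{LA1}, where a Lucas-type analysis of the coefficients would be considerably more awkward.
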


\begin{lem}\label{L3.3} Let $z\in\Bbb F_q^*$ such that ${\tt x}^2+{\tt x}-z$ has two distinct roots in $\Bbb F_q$. Then 
\[
\sum_{0\le l\le \frac q2}(l+1)\binom{-l}l z^l=\frac{1+3z}{1+4z}.
\]
\end{lem}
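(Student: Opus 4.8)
The plan is to leverage Lemma~\ref{L3.1} by first upgrading its pointwise evaluation of the companion sum into an \emph{exact polynomial identity}, and then differentiating. Write
\[
P({\tt z})=\sum_{0\le l\le q/2}\binom{-l}l{\tt z}^l,\qquad Q({\tt z})=\sum_{0\le l\le q/2}(l+1)\binom{-l}l{\tt z}^l,
\]
viewed as polynomials in ${\tt z}$ over $\Bbb F_q$. The elementary but crucial observation is that $Q$ is the formal derivative of ${\tt z}P$, so the polynomial identity $Q={\tt z}P'+P$ holds identically. Thus it suffices to pin down $P$ as an explicit polynomial and then evaluate ${\tt z}P'+P$ on the set $S=\{z\in\Bbb F_q^*:{\tt x}^2+{\tt x}-z\ \text{has two distinct roots in }\Bbb F_q\}$.

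First I would treat odd $q$. By Lemma~\ref{L3.1}, the function $z\mapsto P(z)$ on $\Bbb F_q$ takes the values $1,0,\tfrac12$ according as $1+4z$ is a nonzero square, a nonsquare, or zero (these are exactly the three cases of the lemma, since $1+4z$ is the discriminant of ${\tt x}^2+{\tt x}-z$). Hence $P(z)=\tfrac12\bigl(1+(1+4z)^{(q-1)/2}\bigr)$ for every $z\in\Bbb F_q$. Both sides are polynomials in ${\tt z}$ of degree $<q$ that agree as functions on $\Bbb F_q$, so they are equal as polynomials; this is the step that converts Lemma~\ref{L3.1} into usable algebraic information. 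Differentiating and using $q-1\equiv-1$ in $\Bbb F_q$ gives $P'=-(1+4{\tt z})^{(q-3)/2}$, whence
\[
Q=\tfrac12+\tfrac12(1+4{\tt z})^{(q-1)/2}-{\tt z}(1+4{\tt z})^{(q-3)/2}.
\]
For $z\in S$ we have $(1+4z)^{(q-1)/2}=1$ and $(1+4z)^{(q-3)/2}=(1+4z)^{-1}$, so $Q(z)=1-\frac{z}{1+4z}=\frac{1+3z}{1+4z}$, as desired.

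For even $q$ the same strategy applies with the quadratic character replaced by the absolute trace. Here ${\tt x}^2+{\tt x}-z$ never has a repeated root, so Lemma~\ref{L3.1} gives $P(z)=1$ when $\text{Tr}_{q/2}(z)=0$ and $P(z)=0$ otherwise; therefore $P(z)=1+\text{Tr}_{q/2}(z)$ for all $z\in\Bbb F_q$, and again the degree-$<q$ comparison promotes this to a polynomial identity with $\text{Tr}_{q/2}({\tt z})={\tt z}+{\tt z}^2+{\tt z}^4+\cdots+{\tt z}^{q/2}$. Since $\text{Tr}_{q/2}'=1$ in characteristic $2$, the identity $Q={\tt z}P'+P$ yields $Q={\tt z}+P=1+{\tt z}^2+{\tt z}^4+\cdots+{\tt z}^{q/2}$ after the linear terms cancel. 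Reducing the even-power sum by $z^2+z^4+\cdots+z^{q/2}=\text{Tr}_{q/2}(z)^2+z$ and $\text{Tr}_{q/2}(z)^2=\text{Tr}_{q/2}(z)$ for $z\in\Bbb F_q$ gives $Q(z)=1+z+\text{Tr}_{q/2}(z)$; on $S$ (where $\text{Tr}_{q/2}(z)=0$) this is $1+z$, which equals $\frac{1+3z}{1+4z}$ in characteristic $2$.

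The hard part is the first, conceptual step: recognizing that the closed-form evaluation of $P$ supplied by Lemma~\ref{L3.1}, although a priori only a statement about function values, coincides with a polynomial of degree $<q$ and can therefore be differentiated legitimately. Once $P$ is identified as an honest polynomial, everything reduces to a short derivative computation together with the evaluations $(1+4z)^{(q-1)/2}=1$ (odd $q$) and $\text{Tr}_{q/2}(z)=0$ (even $q$) on $S$. I expect the only routine care to lie in bookkeeping the truncation index $\lfloor q/2\rfloor$ and in handling the two characteristics separately, since no single closed form for $P$ covers both.
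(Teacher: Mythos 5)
Your proof is correct, but it takes a genuinely different route from the paper. The paper proves Lemma~\ref{L3.3} by the constant-term method: it writes $\binom{-l}{l}=\text{ct}\bigl(\frac{1}{{\tt x}^l(1+{\tt x})^l}\bigr)$, sums the weighted truncated geometric series $\sum_{0\le l\le q-2}(l+1){\tt y}^l=\frac{1-{\tt y}^q}{(1-{\tt y})^2}$ with ${\tt y}=\frac{z}{{\tt x}(1+{\tt x})}$, and extracts the constant term by partial fractions, deferring the final computation to \cite[Lemma~5.3]{Hou2}. You instead derive the lemma \emph{from} Lemma~\ref{L3.1}: you observe that $Q=({\tt z}P)'=P+{\tt z}P'$ is a formal polynomial identity, upgrade the pointwise evaluation of $P$ given by Lemma~\ref{L3.1} to the closed forms $P=\frac12\bigl(1+(1+4{\tt z})^{(q-1)/2}\bigr)$ (odd $q$) and $P=1+\text{Tr}_{q/2}({\tt z})$ (even $q$) via the fact that two polynomials of degree $<q$ agreeing as functions on $\Bbb F_q$ coincide, then differentiate and evaluate on the relevant set. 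Both steps check out: the degree bounds hold ($\deg P\le\lfloor q/2\rfloor<q$, and both closed forms have degree $<q$), the derivative computations $(q-1)\equiv -1$ and $\text{Tr}_{q/2}'=1$ are right, and the final evaluations $(1+4z)^{(q-1)/2}=1$, respectively $\text{Tr}_{q/2}(z)=0$, give exactly $\frac{1+3z}{1+4z}$ in each characteristic. The only point you gloss over is that Lemma~\ref{L3.1} is stated for $z\in\Bbb F_q^*$, so the agreement at $z=0$ (needed for interpolation at all $q$ points; agreement on $\Bbb F_q^*$ alone would only determine $P$ up to a multiple of ${\tt z}^{q-1}-1$) must be verified separately — it is trivial, since $P(0)=1$ matches both closed forms. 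What your approach buys: it is self-contained within the paper (no appeal to the computation in \cite{Hou2}), it reuses Lemma~\ref{L3.1} as a black box, and it uniformly treats both characteristics, which matters since the paper uses this lemma in the proof of Theorem~B as well. What the paper's method buys: the ct$(\ )$ machinery scales directly to the heavier sums needed later — Lemma~\ref{L4.2} and Lemma~\ref{LA1} with weights $\binom{l+2}{2}$, handled via Hasse derivatives — where your character/trace closed-form-then-differentiate trick, though extendable in principle, would require more care (ordinary second derivatives degenerate in characteristic $2$).
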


\begin{proof}
We denote the constant term of a Laurent series in ${\tt x}$ by $\text{ct}(\ )$. We have
\[
\begin{split}
\sum_{0\le l\le \frac q2}(l+1)\binom{-l}l z^l\,&=\sum_{0\le l\le q-2}(l+1)\binom{-l}l z^l\cr
&=\sum_{0\le l\le q-2}(l+1)\cdot\text{ct}\Bigl(\frac 1{{\tt x}^l(1+{\tt x})^l}\Bigr)\cdot z^l\cr
&=\text{ct}\biggl[\,\sum_{0\le l\le q-2}(l+1)\Bigl(\frac z{{\tt x}^l(1+{\tt x})}\Bigr)^l \, \biggr].
\end{split}
\]
Since 
\[
\sum_{1\le l\le q-1}l{\tt y}^{l-1}=\frac d{d{\tt y}}\Bigl(\frac{1-{\tt y}^q}{1-{\tt y}}\Bigr)=\frac{1-{\tt y}^q}{(1-{\tt y})^2},
\]
we have
\[
\sum_{0\le l\le q-2}(l+1)\Bigl(\frac z{{\tt x}(1+{\tt x})}\Bigr)^l=\frac{1-(\frac z{{\tt x}(1+{\tt x})})^q}{(1-\frac z{{\tt x}(1+{\tt x})})^2}
=\Bigl(\frac {{\tt x}(1+{\tt x})}{{\tt x}(1+{\tt x})-z}\Bigr)^2\Bigl(1-\frac z{{\tt x}^q}+\frac z{(1+{\tt x})^q}\Bigr).
\]
Thus
\[
\begin{split}
\sum_{0\le l\le \frac q2}(l+1)\binom{-l}l z^l\,&=\text{ct}\biggl[-\frac z{{\tt x}^q}\Bigl(\frac {{\tt x}(1+{\tt x})}{{\tt x}(1+{\tt x})-z}\Bigr)^2\biggr]\cr
&=\text{ct}\biggl[-\frac z{{\tt x}^q}\Bigl(1+\frac z{{\tt x}^2+{\tt x}-z}\Bigr)^2\biggr].
\end{split}
\]
The rest of the calculation is almost identical to that in the proof of \cite[Lemma~5.3]{Hou2}. We omit the details.
\end{proof}

\begin{lem}\cite[Lemmas~5.2 and 5.3]{Hou2}\label{L3.2} Let $z\in\Bbb F_q^*$ such that ${\tt x}^2+{\tt x}-z$ has two distinct roots in $\Bbb F_q$. Then 
\[
\sum_{0\le l\le \frac{q-1}2}\binom{-l}{l+1} z^l=1,
\]
\[
\sum_{0\le l\le \frac{q-1}2}(l+1)\binom{-l}{l+1} z^l=\frac{2z}{1+4z}.
\]
\end{lem}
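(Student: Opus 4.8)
Both identities can be proved by the constant-term method used in the proof of Lemma~\ref{L3.3}, which is the technique of \cite[Lemmas~5.2 and 5.3]{Hou2}; indeed, the second identity is essentially the companion of the sum evaluated in Lemma~\ref{L3.3}, and its proof runs in parallel. Writing $\text{ct}(\ )$ for the constant term of a Laurent series in ${\tt x}$, the starting point is that $\binom{-l}{l+1}$ is the coefficient of ${\tt x}^{l+1}$ in $(1+{\tt x})^{-l}$, so that
\[
\binom{-l}{l+1}z^l=\text{ct}\Bigl[\frac 1{{\tt x}}\Bigl(\frac z{{\tt x}(1+{\tt x})}\Bigr)^l\Bigr].
\]
Summing this over $l$ turns each identity into the constant term of a single rational function, once the series in $l$ is summed in closed form.

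The first thing I would do is replace the range $0\le l\le\frac{q-1}2$ by $0\le l\le q-2$. The discarded terms vanish because $\binom{-l}{l+1}\equiv 0\pmod{p}$ for $l$ in the enlarged range, a routine Lucas-type computation on base-$p$ digits, exactly analogous to the range extension at the start of the proof of Lemma~\ref{L3.3}. On the enlarged range I would then use the finite geometric sum and its formal derivative. For the second identity the relevant series is $\sum_{0\le l\le q-2}(l+1){\tt y}^l=\frac{1-{\tt y}^q}{(1-{\tt y})^2}$ (valid since $\frac d{d{\tt y}}(1-{\tt y}^q)=0$ in characteristic $p$), which is precisely the series appearing in Lemma~\ref{L3.3}; for the first identity I would use $\sum_{0\le l\le q-2}{\tt y}^l=\frac{1-{\tt y}^q}{1-{\tt y}}-{\tt y}^{q-1}$, where I must retain the correction term ${\tt y}^{q-1}$. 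In both cases I set ${\tt y}=\frac z{{\tt x}(1+{\tt x})}$.

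The simplification that makes the constant terms computable is the Frobenius identity in characteristic $p$: since $z\in\Bbb F_q$ and $\frac 1{{\tt x}(1+{\tt x})}=\frac 1{{\tt x}}-\frac 1{1+{\tt x}}$, one has ${\tt y}^q=\frac z{{\tt x}^q}-\frac z{(1+{\tt x})^q}$, whence $1-{\tt y}^q=1-\frac z{{\tt x}^q}+\frac z{(1+{\tt x})^q}$. After substituting $\frac 1{1-{\tt y}}=\frac{{\tt x}(1+{\tt x})}{{\tt x}^2+{\tt x}-z}$, the two sums become constant terms of rational functions whose only nontrivial contributions come from the $\frac 1{{\tt x}^q}$ terms and from the correction term ${\tt y}^{q-1}=\frac 1{{\tt x}^{q-1}(1+{\tt x})^{q-1}}$ (using $z^{q-1}=1$). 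Here the hypothesis that ${\tt x}^2+{\tt x}-z$ has two distinct roots $r_1,r_2\in\Bbb F_q$ is essential: it lets me write $\frac 1{{\tt x}^2+{\tt x}-z}=\frac 1{r_1-r_2}\bigl(\frac 1{{\tt x}-r_1}-\frac 1{{\tt x}-r_2}\bigr)$ over $\Bbb F_q$ and, using $r_i^q=r_i$, read off the coefficient of ${\tt x}^q$ as a rational function of $z$ alone (for instance, the coefficient of ${\tt x}^q$ in $\frac 1{{\tt x}^2+{\tt x}-z}$ comes out to $-\frac 1{z^2}$). Were the roots outside $\Bbb F_q$, the relation $r_i^q=r_i$ would fail and the answer would change, which is exactly the phenomenon recorded in Lemma~\ref{L3.1}.

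I expect the main obstacle to be the bookkeeping in this final constant-term extraction: isolating which of the three summands $1$, $-\frac z{{\tt x}^q}$, $\frac z{(1+{\tt x})^q}$ actually contributes, computing the relevant coefficients of ${\tt x}^q$, and---for the first identity---correctly accounting for the correction term ${\tt y}^{q-1}$, which is not inert and in fact supplies the value $1$. Once these contributions are assembled and simplified using $z^{q-1}=1$ together with $r_1+r_2=-1$ and $r_1r_2=-z$, the sums collapse to $1$ and $\frac{2z}{1+4z}$ respectively. Since \cite[Lemma~5.3]{Hou2} carries out the analogous extraction for the companion sum of Lemma~\ref{L3.3}, I would cite that computation rather than reproduce it in full.
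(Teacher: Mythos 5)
Your proposal is correct and is essentially the paper's own approach: the paper gives no proof of this lemma, simply citing \cite[Lemmas~5.2 and 5.3]{Hou2}, and the constant-term method you describe is precisely the technique used in that source and in the companion Lemmas~\ref{L3.3}, \ref{L4.2} and \ref{LA1} of this paper (your intermediate values check out, e.g.\ the coefficient of ${\tt x}^q$ in $\frac 1{{\tt x}^2+{\tt x}-z}$ is indeed $-\frac 1{z^2}$). One caution when you carry out the final bookkeeping for the first identity: there the prefactor $\frac{1+{\tt x}}{{\tt x}^2+{\tt x}-z}$ has nonzero constant term, so unlike in those companion lemmas all three summands $1$, $-\frac z{{\tt x}^q}$, $\frac z{(1+{\tt x})^q}$ contribute nontrivially (namely $-\frac 1z$, $\frac{1+z}z$, $-1$, which cancel to $0$), and the correction term $-\frac 1{\tt x}\,{\tt y}^{q-1}$ then supplies the value $1$, exactly as you predicted.
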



\section{Proof of Theorem~B}

We follow the same outline of the proof of Theorem~A. However, certain critical arguments in the proof of Theorem~A fail in characteristic $2$, and they have to be replaced with new approaches. First, in Subsection~\ref{s3.2}, the discriminant $D(g)$ in \eqref{3.12}, which was at the heart of the proof there, is rendered useless in characteristic $2$. Second, in Subsection~\ref{s3.3}, the calculation in \eqref{3.17} does not produce any useful information, again because of the even characteristic.

\subsection{The case $a(a-1)b=0$}\

\medskip

Let $f=a{\tt x}+b{\tt x}^q+{\tt x}^{2q-1}\in\Bbb F_q[{\tt x}]$, where $q$ is even. We first prove Theorem~B under the assumption $a(a-1)b=0$.

\medskip
{\bf Case 1.} Assume $a=b=0$. Then $f={\tt x}^{2q-1}$ is a PP of $\Bbb F_{q^2}$ if and only if $\text{gcd}(2q-1,q^2-1)=1$, i.e., $q=2^{2k}$, which is equivalent to (i) in Theorem~B with $a=b=0$.

\medskip
{\bf Case 2.} Assume $a\ne 0$, $b=0$. By \cite[Theorem~1.1]{Hou1}, $f=a{\tt x}+{\tt x}^{2q-1}$ is never a PP of $\Bbb F_{q^2}$.
  
\medskip
{\bf Case 3.} Assume $a=0$, $b\ne 0$. By Case 3 in Subsection~\ref{s3.1}, $f$ cannot be a PP of $\Bbb F_{q^2}$.

\medskip
{\bf Case 4.} Assume $a=1$. The conclusion in Case 4 of Subsection~\ref{s3.1} also holds for characteristic $2$: $f$ is a PP of $\Bbb F_{q^2}$ if and only if ${\tt x}^2+b{\tt x}+1$ has two distinct roots in $\Bbb F_q$, i.e., $b\ne 0$ and $\text{Tr}_{q/2}(\frac 1b)=0$, which is (ii) in Theorem~B. (Note: Theorem~B with $a=1$ also appeared as \cite[Theorem~1.2]{Hou2}.)  

\subsection{The case $a(a-1)b\ne 0$, sufficiency}\

\medskip
We are given that $q$ ($>2$) is even, $a\in\Bbb F_q\setminus\Bbb F_2$, $\text{Tr}_{q/2}(\frac 1{a+1})=0$, and $b^2=a^2+a$. The goal is to show that $f$ is a PP of $\Bbb F_{q^2}$.

For each $y\in\Bbb F_{q^2}$, we show that there is at most one $x\in\Bbb F_{q^2}$ such that $f(x)=y$. We only have to consider the case $y\in\Bbb F_{q^2}\setminus\Bbb F_q$. (We have $\text{Tr}_{q/2}(\frac1{(a+b+1)^2})=\text{Tr}_{q/2}(\frac 1{a+1})=0$. Thus ${\tt x}^2+(a+b+1){\tt x}+1$ has two distinct roots in $\Bbb F_q$. By the argument in Subsection~\ref{s3.2}, $1^\circ$, $f(\Bbb F_{q^2}\setminus\Bbb F_q)\subset\Bbb F_{q^2}\setminus\Bbb F_q$. By Subsection~\ref{s3.2}, $2^\circ$, if $y\in\Bbb F_q$, there is precisely one $x\in\Bbb F_{q^2}$ such that $f(x)=y$.)

Put $\tau=\text{Tr}_{q^2/q}(y)$, $\eta=\text{N}_{q^2/q}(y)$, $\sigma=\frac{\tau^2}\eta$. Then $\text{Tr}_{q/2}(\frac 1\sigma)=1$ since $y\in\Bbb F_{q^2}\setminus\Bbb F_q$. By \eqref{3.11} and the argument of Subsection~\ref{s3.2}, $3^\circ$, it suffices to show that the equation 
\begin{equation}\label{4.1}
s^3+a\sigma s^2+(a+1)(b\sigma+1)s+(a+1)\sigma=0
\end{equation}
has at most one solution $s\in\Bbb F_q$. Let $u=\frac 1\sigma$ and rewrite \eqref{4.1} as
\[
\frac 1{s^3}+(b+u)\frac 1{s^2}+\frac a{a+1}\frac 1s+\frac u{a+1}=0.
\]
So it suffices to show that
\[
g:={\tt x}^3+(b+u){\tt x}^2+\frac a{a+1}{\tt x}+\frac u{a+1}
\]
has at most one root in $\Bbb F_q$. Put $A=b+u$, $B=\frac a{a+1}$, $C=\frac u{a+1}$. Assume to the contrary that $g$ has at least two distinct roots in $\Bbb F_q$. Then $g$ splits in $\Bbb F_q$. By
a theorem of K. Conrad, stated as Theorem~\ref{T4.1} at the end of this subsection, we conclude that
\[
{\tt x}^2+(AB+C){\tt x}+(A^3C+B^3+C^2)
\]
is reducible over $\Bbb F_q$. First assume $AB+C\ne 0$. Then
\begin{equation}\label{4.2}
\text{Tr}_{q/2}\Bigl(\frac{A^3C+B^3+C^2}{(AB+C)^2}\Bigr)=0.
\end{equation}
We have
\begin{equation}\label{4.2.1}
AB+C=\frac{(a+1)u+ab}{a+1},
\end{equation}
\[
A^3C+B^3+C^2=\frac 1{(a+1)^3}\bigl[(a+1)^2(b+u)^3u+a^3+(a+1)u^2\bigr].
\]
Hence
\[
\frac{A^3C+B^3+C^2}{(AB+C)^2}=\frac{(a+1)^2(b+u)^3u+a^3+(a+1)u^2}{(a+1)\bigl[(a+1)u+ab\bigr]^2}.
\]
Using the relation $b^2=a^2+a$ in the above equation, we find that  
\begin{equation}\label{4.3}
\frac{A^3C+B^3+C^2}{(AB+C)^2}=u^2+\frac 1{a+1}+\frac{bu}{a+1}+\Bigl( \frac{bu}{a+1}\Bigr)^2+\frac{a^2}{(a+1)(bu+a^2)}+\Bigl[\frac{a^2}{(a+1)(bu+a^2)} \Bigr]^2.
\end{equation}
It follows from \eqref{4.3} that 
\[
\text{Tr}_{q/2}\Bigl(\frac{A^3C+B^3+C^2}{(AB+C)^2}\Bigr)=\text{Tr}_{q/2}(u^2)=1,
\]
which contradicts \eqref{4.2}. It took some effort to find the desirable expression in \eqref{4.3}. But the verification of \eqref{4.3} should be straightforward. 

Now assume $AB+C=0$. By \eqref{4.2.1}, $u=\frac{ab}{a+1}$. Using this and the relation $b^2=a^2+a$, we see that $B^3=C^2$. Thus
\[
g={\tt x}^3+A{\tt x}^2+B{\tt x}+C={\tt x}^3+\frac CB{\tt x}^2+B{\tt x}+C=\Bigl({\tt x}+\frac CB\Bigr)({\tt x}^2+B)=\Bigl({\tt x}+\frac CB\Bigr)^3,
\]
which is again a contradiction. 

This completes the proof of the sufficiency part of Theorem~B under the assumption $a(a-1)b\ne 0$.  

\begin{thm}\label{T4.1}\cite[Thoerem 2.1]{Con}
Let $K$ be any field and $f={\tt x}^3+A{\tt x}^2+B{\tt x}+C\in K[{\tt x}]$ have roots $r_1,r_2,r_3$ in a splitting field. Then
\begin{equation}\label{4.4}
\begin{split}
&\bigl({\tt x}-(r_1^2r_2+r_2^2r_3+r_3^2r_1)\bigr)\bigl({\tt x}-(r_2^2r_1+r_1^2r_3+r_3^2r_2)\bigr)\cr
=\,&{\tt x}^2+(AB-3C){\tt x}+(A^3C+B^3+9C^2-6ABC),
\end{split}
\end{equation}
and the above quadratic polynomial has the same discriminant as $f$. 
\end{thm}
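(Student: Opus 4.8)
The plan is to recognize the two expressions
\[
\theta_1=r_1^2r_2+r_2^2r_3+r_3^2r_1,\qquad \theta_2=r_1r_2^2+r_2r_3^2+r_3r_1^2
\]
as the roots of the quadratic on the left of \eqref{4.4}, and then to verify by Vieta's formulas that its coefficients are the ones displayed on the right. Set $e_1=r_1+r_2+r_3=-A$, $e_2=r_1r_2+r_1r_3+r_2r_3=B$, and $e_3=r_1r_2r_3=-C$. Although $\theta_1$ and $\theta_2$ need not lie in $K$ individually, both $\theta_1+\theta_2$ and $\theta_1\theta_2$ are symmetric in $r_1,r_2,r_3$, hence are polynomials in $e_1,e_2,e_3$ with integer coefficients by the fundamental theorem of symmetric functions. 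The resulting expressions are universal identities over $\Bbb Z$, so they specialize to $K$ regardless of $\text{char}\,K$; in particular I may carry out all computations over $\Bbb Q$ and substitute $e_1=-A,\,e_2=B,\,e_3=-C$ only at the end. This is what makes the statement hold for an arbitrary field.

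For the coefficient of ${\tt x}$, the sum $\theta_1+\theta_2=\sum_{i\ne j}r_i^2r_j$ is the familiar symmetric function $e_1e_2-3e_3$, which becomes $-AB+3C=-(AB-3C)$ after substitution; this matches the negated sum of the roots required by \eqref{4.4}. The substantive computation is the constant term $\theta_1\theta_2$. Expanding the product yields nine monomials of degree six that fall into three symmetric groups: $\sum_{i<j}(r_ir_j)^3$, the single term $3(r_1r_2r_3)^2=3e_3^2$, and $r_1r_2r_3(r_1^3+r_2^3+r_3^3)=e_3p_3$, where $p_3=r_1^3+r_2^3+r_3^3$. I would evaluate the first group by observing that $r_1r_2,\,r_1r_3,\,r_2r_3$ are the roots of ${\tt y}^3-e_2{\tt y}^2+e_1e_3{\tt y}-e_3^2$ and applying the Newton identity for the third power sum, and I would use $p_3=e_1^3-3e_1e_2+3e_3$ for the third group. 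Summing the three contributions gives $\theta_1\theta_2=e_1^3e_3+e_2^3+9e_3^2-6e_1e_2e_3$, and substituting $e_1=-A,\,e_2=B,\,e_3=-C$ produces $A^3C+B^3+9C^2-6ABC$, exactly the constant term in \eqref{4.4}.

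For the assertion about discriminants I would avoid any further symmetric-function work and instead use the Vandermonde identity $\theta_1-\theta_2=-(r_1-r_2)(r_2-r_3)(r_3-r_1)$, so that $(\theta_1-\theta_2)^2=\prod_{i<j}(r_i-r_j)^2=\text{disc}(f)$. Since $\theta_1$ and $\theta_2$ are the roots of the monic quadratic in \eqref{4.4}, the discriminant of that quadratic is by definition $(\theta_1-\theta_2)^2$, which we have just identified with $\text{disc}(f)$; no direct comparison of $(AB-3C)^2-4(A^3C+B^3+9C^2-6ABC)$ with the cubic discriminant is needed. The only laborious step is the bookkeeping for $\theta_1\theta_2$, but once the nine monomials are grouped as above it is entirely routine, and I foresee no conceptual obstacle.
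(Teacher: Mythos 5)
Your proof is correct: the symmetric-function evaluations of $\theta_1+\theta_2=e_1e_2-3e_3$ and $\theta_1\theta_2=e_1^3e_3+e_2^3+9e_3^2-6e_1e_2e_3$ (via the three-group expansion and Newton's identities), the sign bookkeeping under $e_1=-A$, $e_2=B$, $e_3=-C$, and the identity $(\theta_1-\theta_2)^2=\prod_{i<j}(r_i-r_j)^2$ all check out, and phrasing everything as universal identities over $\Bbb Z$ legitimately makes the argument characteristic-free. The paper itself gives no proof of this statement---it quotes it from Conrad's note, remarking that it is "proved by direct computation"---so your argument is essentially that same direct-computation route, merely organized through elementary symmetric polynomials rather than brute-force expansion.
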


Theorem~\ref{4.1}, proved by direct computation, was used in \cite{Con} to obtain a criterion that determines whether the Galois group of a separable irreducible cubic polynomial $f$ over $K$ (of any characteristic) is $S_3$ or $A_3$: the Galois group is $S_3$ ($A_3$) if the quadratic polynomial in \eqref{4.4} is irreducible (reducible) over $K$. 


\subsection{The case $a(a-1)b\ne 0$, necessity}\label{s4.3}\

\medskip

Assume that $q$ is even and $f=a{\tt x}+b{\tt x}^q+{\tt x}^{2q-1}\in\Bbb F_q[{\tt x}]$ is a PP of $\Bbb F_{q^2}$, where $a(a-1)b\ne 0$. The goal is to prove that $\text{Tr}_{q/2}(\frac 1{a+1})=0$ and $b^2=a^2+a$.

Let $z=\frac a{b^2}$. By \eqref{3.16.0}, $\sum_{0\le l\le\frac q2}\binom{-l}lz^l=1$. It follows from Lemma~\ref{L3.1} that ${\tt x}^2+{\tt x}+z$ is reducible over $\Bbb F_q$. Hence $\text{Tr}_{q/2}(z)=0$. (Note that $z=\frac a{b^2}=\frac 1{a+1}$ after we prove that $b^2=a^2+a$.)

It remains to show that $b^2=a^2+a$. Since $z\ne 0$ and $\text{Tr}_{q/2}(z)=0$, we must have $q>2$. Letting $\alpha=2$ and $\beta=q-3$ in \eqref{3.16}, we have 
\[
\begin{split}
&\sum_{x\in\Bbb F_{q^2}}f(x)^{2+(q-3)q}\cr
=\,&\sum_{q-2+i+k-j-l=0,\,q+1}\binom 2i\binom ik\binom{q-3}j\binom jl a^{-i-j}b^{i+j-k-l}\cr
=\,&\sum_{q-2-j-l=0,\,q+1}\binom{q-3}j\binom jl a^{-j}b^{j-l}+\sum_{q-j-l=0,\,q+1}\binom{q-3}j\binom jl a^{-2-j}b^{2+j-l}\cr
&+\sum_{q+2-j-l=0,\,q+1}\binom{q-3}j\binom jl a^{-2-j}b^{j-l}\cr
=\,&\sum_{q-2-j-l=0}\binom{q-3}j\binom jl a^{-j}b^{j-l}+\sum_{q-j-l=0}\binom{q-3}j\binom jl a^{-2-j}b^{2+j-l}\cr
&+\sum_{q+2-j-l=0}\binom{q-3}j\binom jl a^{-2-j}b^{j-l}+\sum_{j+l=1}\binom{q-3}j\binom jl a^{-2-j}b^{j-l} \cr
\end{split}
\]
\[
\begin{split}
=\,&\sum_{1\le l\le \frac q2-1}\binom{q-3}{q-2-l}\binom{q-2-l}l a^{-(q-2-l)}b^{q-2-2l}\cr
&+\sum_{3\le l\le \frac q2}\binom{q-3}{q-l}\binom{q-l}l a^{-2-(q-l)}b^{2+q-2l}\cr
&+\sum_{5\le l\le \frac q2+1}\binom{q-3}{q+2-l}\binom{q+2-l}l a^{-2-(q+2-l)}b^{q+2-2l}+a^{-3}b.
\end{split}
\]
Since $\binom{-3}k=(-1)^k\binom{k+2}2$ for all integers $k\ge 0$, the above computation continues as
\[
\begin{split}
&\sum_{x\in\Bbb F_{q^2}}f(x)^{2+(q-3)q}\cr
=\,&\sum_{1\le l\le \frac q2-1}\binom{q-l}2\binom{-2-l}l a^{1+l}b^{-1-2l}+\sum_{3\le l\le \frac q2}\binom{q-l+2}2\binom{-l}l a^{-3+l}b^{3-2l}\cr
&+\sum_{5\le l\le \frac q2+1}\binom{q+4-l}2\binom{2-l}l a^{-5+l}b^{3-2l}+a^{-3}b\cr
=\,&\frac ab\sum_{1\le l\le \frac q2-1}\binom{-l}2\binom{-l-2}l z^l+\frac{b^3}{a^3}\sum_{3\le l\le \frac q2}\binom{-l+2}2\binom{-l}l z^l\cr
&+\frac{b^3}{a^5}\sum_{5\le l\le \frac q2+1}\binom{-l+4}2\binom{-l+2}l z^l+\frac b{a^3}.
\end{split}
\]
Note that in the first sum in the above,
\[
\binom{-l-2}l=-\binom{-l-1}{l+1},
\]
and in the third sum,
\[
\begin{split}
\binom{-l+4}2\binom{-l+2}l\,&=\frac{(l-4)(l-3)}2\binom{-l+2}l\cr
&\equiv\frac{(l-3)l}2\binom{-l+2}l\pmod 2\cr
&=\frac{(l-3)(-l+2)}2\binom{-l+1}{l-1}.
\end{split}
\]
Therefore we have 
\[
\begin{split}
&\sum_{x\in\Bbb F_{q^2}}f(x)^{2+(q-3)q}\cr
=\,&\frac ab\sum_{1\le l\le \frac q2-1}\binom{-l}2\binom{-l-1}{l+1} z^l+\frac{b^3}{a^3}\sum_{3\le l\le \frac q2}\binom{-l+2}2\binom{-l}l z^l\cr
&+\frac{b^3}{a^5}\sum_{5\le l\le \frac q2+1}\frac{(l-3)(l-2)}2\binom{-l+1}{l-1} z^l+\frac b{a^3}\cr
=\,&\frac ab\sum_{0\le l\le \frac q2}\binom{-l+1}2\binom{-l}l z^{l-1}+\frac{b^3}{a^3}\biggl[\sum_{0\le l\le \frac q2}\binom{-l+2}2\binom{-l}l z^l+1\biggr]\cr
&+\frac{b^3}{a^5}\biggl[\sum_{0\le l\le \frac q2}\frac{(l-2)(l-1)}2\binom{-l}l z^{l+1}+z\biggr]+\frac b{a^3}\cr
=\,&\sum_{0\le l\le \frac q2}\binom{-l}l z^l\Bigl[b\binom{-l+1}2+\frac{b^3}{a^3}\binom{-l+2}2+\frac b{a^4}\cdot\frac{(l-2)(l-1)}2\Bigr]+\frac{b^3}{a^3}+\frac b{a^4}+\frac b{a^3}\cr
\end{split}
\]
\[
\begin{split}
=\,&\sum_{0\le l\le \frac q2}\binom{-l}l z^l\Bigl[\Bigl(b+\frac{b^3}{a^3}+\frac b{a^4}\Bigr)\binom{l+2}2+\Bigl(\frac{b^3}{a^3}+\frac b{a^4}\Bigr)(l+1)+\Bigl(b+\frac{b^3}{a^3}+\frac b{a^4}\Bigr)\Bigr]\cr
&+\frac{b^3}{a^3}+\frac b{a^4}+\frac b{a^3}.
\end{split}
\]
Using the formulas
\[
\kern-1.4cm\sum_{0\le l\le\frac q2}\binom{-l}lz^l=1\kern 3cm\text{(Lemma~\ref{L3.1})},
\]
\[
\kern-1.4cm\sum_{0\le l\le\frac q2}(l+1)\binom{-l}lz^l=1+z\kern 1.5cm\text{(Lemma~\ref{L3.3})},
\]
\begin{equation}\label{4.5}
\sum_{0\le l\le\frac q2}\binom{l+2}2\binom{-l}lz^l=1+z^2\kern 1cm\text{(Lemma~\ref{L4.2}, to be proved)},
\end{equation}
we have
\[
\begin{split}
&\sum_{x\in\Bbb F_{q^2}}f(x)^{2+(q-3)q}\cr
=\,&\Bigl(b+\frac{b^3}{a^3}+\frac b{a^4}\Bigr)(1+z^2)+\Bigl(\frac{b^3}{a^3}+\frac b{a^4}\Bigr)(1+z)+b+\frac{b^3}{a^3}+\frac b{a^4}+\frac{b^3}{a^3}+\frac b{a^4}+\frac b{a^3}\cr
=\,&\Bigl(b+\frac{b^3}{a^3}+\frac b{a^4}\Bigr)\frac{a^2}{b^4}+\Bigl(\frac{a^3}{b^3}+\frac b{a^4}\Bigr)\frac a{b^2}+\frac b{a^3}\cr
=\,&\frac{(a+1)(a+b+1)^2(a^2+b^2+a)}{a^3b^3}.
\end{split}
\]
Since $f$ is a PP of $\Bbb F_{q^2}$, the above expression equals $0$. Since $a+b+1\ne 0$ ($f(x)=(a+b+1)x$ for all $x\in\Bbb F_q$), we must have $b^2=a^2+a$. 

To complete the proof of the necessity part of Theorem~B under the assumption $a(a-1)b\ne 0$, we only need to establish \eqref{4.5}. The following lemma, which gives \eqref{4.5}, holds in all characteristics.

\begin{lem}\label{L4.2}
Let $\Bbb F_q$ be any finite field. Let $z\in\Bbb F_q^*$ such that ${\tt x}^2+{\tt x}-z$ has two distinct roots in $\Bbb F_q$. Then
\begin{equation}\label{4.5.1}
\sum_{0\le l\le \frac q2}\binom{l+2}2\binom{-l}lz^l=
\begin{cases}
1+z&\text{if}\ q=2,\vspace{2mm}\cr
\displaystyle\frac{1+6z+11z^2}{(1+4z)^2}&\text{if}\ q>2.
\end{cases}
\end{equation}
\end{lem}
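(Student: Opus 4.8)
The plan is to prove Lemma~\ref{L4.2} by the constant-term method used for Lemma~\ref{L3.3}, the only structural difference being that the present sum produces a triple pole rather than a double pole. Throughout, $\text{ct}(\ )$ denotes the constant term of a Laurent series in ${\tt x}$, and I use $\binom{-l}l=\text{ct}\bigl(\frac1{{\tt x}^l(1+{\tt x})^l}\bigr)$. The case $q=2$ is immediate: only $l=0,1$ occur, and over $\Bbb F_2$ the sum is $\binom22\binom00+\binom32\binom{-1}1z=1-3z=1+z$ (in fact the hypothesis is vacuous here, since ${\tt x}^2+{\tt x}+1$ is irreducible over $\Bbb F_2$). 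So assume $q>2$.

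As in Lemma~\ref{L3.3}, the summand vanishes for $q/2<l\le q-2$ because $\binom{-l}l\equiv0\pmod p$ there, so I would first extend the range to $0\le l\le q-2$. Putting $w=\frac z{{\tt x}(1+{\tt x})}$ and using linearity of $\text{ct}$, the sum equals $\text{ct}\bigl[\sum_{0\le l\le q-2}\binom{l+2}2w^l\bigr]$. Next I would record the truncated generating function over $\Bbb F_p$,
\[
\sum_{0\le l\le q-2}\binom{l+2}2{\tt y}^l=\frac{1-{\tt y}^q}{(1-{\tt y})^3},
\]
obtained from $\binom{l+2}2=\frac{l(l+1)}2+(l+1)$ together with the identity already used in the proof of Lemma~\ref{L3.3}; the term ${\tt y}^q$ survives precisely because $q\equiv0\pmod p$.

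Substituting ${\tt y}=w$ and using $z^q=z$ and $(1+{\tt x})^q=1+{\tt x}^q$, the quotient becomes
\[
\frac{{\tt x}^3(1+{\tt x})^3}{({\tt x}^2+{\tt x}-z)^3}-\frac z{{\tt x}^q(1+{\tt x}^q)}\cdot\frac{{\tt x}^3(1+{\tt x})^3}{({\tt x}^2+{\tt x}-z)^3}.
\]
The first term lies in ${\tt x}^3\Bbb F_q[[{\tt x}]]$ and has no constant term; in the second the factor $(1+{\tt x}^q)^{-1}$ is inert because the numerator begins at ${\tt x}^3$, so the sum equals $-z\,[{\tt x}^q]\frac{{\tt x}^3(1+{\tt x})^3}{({\tt x}^2+{\tt x}-z)^3}$, where $[{\tt x}^q]$ is the coefficient of ${\tt x}^q$. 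Writing $\phi={\tt x}^2+{\tt x}-z=({\tt x}-r_1)({\tt x}-r_2)$ with $r_1,r_2\in\Bbb F_q$ distinct and using ${\tt x}^2+{\tt x}=\phi+z$, I expand $\frac{({\tt x}^2+{\tt x})^3}{\phi^3}=(1+z\phi^{-1})^3=1+3z\phi^{-1}+3z^2\phi^{-2}+z^3\phi^{-3}$, so the desired coefficient is $3zc_1+3z^2c_2+z^3c_3$ with $c_j=[{\tt x}^q]\phi^{-j}$. Each $c_j$ I would evaluate by partial fractions $\phi^{-j}=\sum_{i=1}^j\bigl(\alpha_i({\tt x}-r_1)^{-i}+\beta_i({\tt x}-r_2)^{-i}\bigr)$, where $\alpha_i=\binom{-j}{j-i}\delta^{-(2j-i)}$ with $\delta=r_1-r_2$ (and $\beta_i$ by the exchange $r_1\leftrightarrow r_2$), together with $[{\tt x}^q]({\tt x}-r)^{-i}=(-1)^i\binom{q+i-1}q r^{-q-i}=(-1)^ir^{-1-i}$, the last equality holding because $r^q=r$ and $\binom{q+i-1}q\equiv1\pmod p$ for $1\le i\le3$ when $q>2$. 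Each $c_j$ thereby becomes a symmetric function of $r_1,r_2$, which I evaluate via $r_1+r_2=-1$, $r_1r_2=-z$, $\delta^2=1+4z$, obtaining $c_1=-z^{-2}$, $c_2=\frac{1+5z}{z^3(1+4z)}$, $c_3=-\frac{1+9z+23z^2}{z^4(1+4z)^2}$; assembling gives $-z(3zc_1+3z^2c_2+z^3c_3)=\frac{1+6z+11z^2}{(1+4z)^2}$, as claimed.

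I expect the main obstacle to be the bookkeeping for the triple pole: correctly extracting $[{\tt x}^q]\phi^{-3}$ and recombining the three contributions into the clean symmetric functions above, and verifying $\binom{q+i-1}q\equiv1\pmod p$ for $i\le3$ in both parities. It is exactly the failure of this congruence at $q=2$ (where $\binom42\equiv0\pmod2$ annihilates the $c_3$-term) that forces the separate treatment of that case. Every other step is characteristic-free, since all binomial coefficients are integers and the roots $r_1,r_2$ are distinct; in particular, in characteristic $2$ the formula collapses to $\frac{1+z^2}1=1+z^2$, which is the value invoked in \eqref{4.5}.
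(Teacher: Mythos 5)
Your proposal is correct and takes essentially the same approach as the paper's proof: the constant-term representation of $\binom{-l}l$, the truncated generating function $\frac{1-{\tt y}^q}{(1-{\tt y})^3}$ (the paper derives it via a second Hasse derivative, which also handles characteristic $2$ cleanly), the reduction to $-z\,[{\tt x}^q]\bigl(1+z({\tt x}^2+{\tt x}-z)^{-1}\bigr)^3$, and partial fractions over the distinct roots evaluated through $r_1+r_2=-1$, $r_1r_2=-z$, $(r_1-r_2)^2=1+4z$. The only differences are organizational—you expand the cube binomially and compute $[{\tt x}^q]\phi^{-j}$ for $j=1,2,3$ separately, whereas the paper performs one partial-fraction expansion of the whole cube—and your intermediate values $c_1,c_2,c_3$ and final assembly are all correct.
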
 

\begin{proof}
When $q=2$, \eqref{4.5.1} is easily verified. Assume $q>2$. Recall that $\text{ct}(\ )$ denotes the constant term of a Laurent series in ${\tt x}$. We have
\[
\begin{split}
S:\,&= \sum_{0\le l\le\frac q2}\binom{l+2}2\binom{-l}lz^l\cr
&=\sum_{0\le l\le q-3}\binom{l+2}2\binom{-l}lz^l\cr
&=\sum_{0\le l\le q-3}\binom{l+2}2\cdot\text{ct}\Bigl(\frac1{{\tt x}^l(1+{\tt x})^l}\Bigr)\cdot z^{l}\cr
&=\text{ct}\biggl[\,\sum_{0\le l\le q-3}\binom{l+2}2\Bigl(\frac z{{\tt x}(1+{\tt x})}\Bigr)^l\,\biggr].
\end{split}
\]
Let $\partial^2$ denote the second order Hasse derivative with respect to ${\tt y}$ \cite{Has36}. Since
\[
\sum_{2\le l\le q-1}\binom l2{\tt y}^{l-2}=\partial^2\sum_{0\le l\le q-1}{\tt y}^k=\partial^2\bigl[(1-{\tt y}^q)(1-{\tt y})^{-1}\bigr]=\frac{1-{\tt y}^q}{(1-{\tt y})^3},
\]
we have
\[
\begin{split}
\sum_{0\le l\le q-3}\binom{l+2}2\Bigl(\frac z{{\tt x}(1+{\tt x})}\Bigr)^l \,&=\frac{1-(\frac z{{\tt x}(1+{\tt x})})^q}{(1-\frac z{{\tt x}(1+{\tt x})})^3}\cr
&=\Bigl(\frac{{\tt x}(1+{\tt x})}{{\tt x}^2+{\tt x}-z}\Bigr)^3\Bigl(1-\frac z{{\tt x}^q}+\frac z{(1+{\tt x})^q}\Bigr).
\end{split}
\]
Thus
\begin{equation}\label{4.6}
\begin{split}
S\,&=\text{ct}\Bigl[\Bigl(\frac{{\tt x}(1+{\tt x})}{{\tt x}^2+{\tt x}-z}\Bigr)^3\Bigl(1-\frac z{{\tt x}^q}+\frac z{(1+{\tt x})^q}\Bigr)\Bigr]\cr
&=\text{ct}\Bigl[-\frac z{{\tt x}^q}\Bigl(\frac{{\tt x}(1+{\tt x})}{{\tt x}^2+{\tt x}-z}\Bigr)^3\Bigr]\cr
&=-z\cdot \text{ct}\Bigl[{\tt x}^{-q}\Bigl(1+\frac z{{\tt x}^2+{\tt x}-z}\Bigr)^3\Bigr].
\end{split}
\end{equation}
Write ${\tt x}^2+{\tt x}-z=({\tt x}-r_1)({\tt x}-r_2)$. Using the substitution 
\[
\frac 1{ ({\tt x}-r_1)({\tt x}-r_2)}=\frac 1{r_1-r_2}\Bigl(\frac 1{{\tt x}-r_1}-\frac 1{{\tt x}-r_2}\Bigr)
\]
repeatedly, we find that 
\begin{equation}\label{4.7}
\begin{split}
&\Bigl(1+\frac z{({\tt x}-r_1)({\tt x}-r_2)}\Bigr)^3\cr
=\,&1+(3zc-6z^2c^3+6z^3c^5)\Bigl(\frac 1{{\tt x}-r_1}-\frac 1{{\tt x}-r_2}\Bigr)\cr
&+(3z^2c^2-3z^3c^4)\Bigl(\frac 1{({\tt x}-r_1)^2}-\frac 1{({\tt x}-r_2)^2}\Bigr)+z^3c^3\Bigl(\frac 1{({\tt x}-r_1)^3}-\frac 1{({\tt x}-r_2)^3}\Bigr),
\end{split}
\end{equation}
where $c=\frac 1{r_1-r_2}$. Note that for $r\ne 0$ (in any field) and integer $k$, we have
\begin{equation}\label{4.8}
({\tt x}-r)^k=(-r)^k\Bigl(1-\frac{\tt x}r\Bigr)^k=(-r)^k\sum_{l\ge 0}\binom kl(-r)^{-l}{\tt x}^l.
\end{equation}
Combining \eqref{4.6} -- \eqref{4.8} gives 
\begin{equation}\label{4.9}
\begin{split}
S=\,&-z\biggl[(3zc-6z^2c^3+6z^3c^5)\bigl((-r_1)^{-1-q}-(-r_2)^{-1-q}\bigr)\binom{-1}q\cr
&+(3z^2c^2-3z^3c^4\bigl((-r_1)^{-2-q}+(-r_2)^{-2-q}\bigr)\binom{-2}q\cr
&+z^3c^3\bigl((-r_1)^{-3-q}-(-r_2)^{-3-q}\bigr)\binom{-3}q\biggr]\cr
=\,&-z\bigl[(3z-6z^2c^2+6z^3c^4)\cdot c\cdot (-r_1^{-2}+r_2^{-2})\cr
&+(3z^2c^2-3z^3c^4)(r_1^{-3}+r_2^{-3})+z^3c^2\cdot c\cdot(-r_1^{-4}+r_2^{-4})\bigr].
\end{split}
\end{equation}
In the above
\[
c^2=\frac 1{(r_1-r_2)^2}=\frac 1{(r_1+r_2)^2-4r_1r_2}=\frac 1{1+4z},
\]
\[
c(-r_1^{-2}+r_2^{-2})=\frac 1{r_1-r_2}\cdot\frac{r_1^2-r_2^2}{(r_1r_2)^2}=\frac{r_1+r_2}{z^2}=-\frac 1{z^2},
\]
\[
r_1^{-3}+r_2^{-3}=\frac{r_1^3+r_2^3}{(r_1r_2)^3}=\frac{(r_1+r_2)^3-3r_2r_2(r_1+r_2)}{-z^3}=\frac{-1-3z}{-z^3}=\frac{1+3z}{z^3},
\]
\[
c(-r_1^{-4}+r_2^{-4})=\frac 1{r_1-r_2}\cdot\frac{r_1^4-r_2^4}{(r_1r_2)^4}=\frac{(r_1+r_2)(r_1^2+r_2^2)}{z^4}=\frac{-(1+2z)}{z^4}.
\]
Making the above substitutions in \eqref{4.9}, we have
\[
S=\frac{1+6z+11z^2}{(1+4z)^2}.
\]
\end{proof}


\section{The Polynomial $g_{n,q}$}\label{s5}

The trinomial $a{\tt x}+b{\tt x}^q+{\tt x}^{2q-1}$ owes its origin to a class of seemingly unrelated polynomials. 

It is known that \cite{Car35,HHM}
\begin{equation}\label{5.1}
\sum_{c\in\Bbb F_q}\frac 1{{\tt x}+c}=\frac 1{{\tt x}-{\tt x}^q}.
\end{equation}
We have
\[
\begin{split}
\sum_{n\ge 0}\sum_{c\in\Bbb F_q}({\tt x}+c)^n{\tt t}^n\,&=\sum_{c\in\Bbb F_q}\frac 1{1-({\tt x}+c){\tt t}}\cr
&=\frac 1{\tt t}\sum_{c\in\Bbb F_q}\frac 1{\frac 1{\tt t}-{\tt x}-c}\cr 
&=\frac 1{\tt t}\frac 1{(\frac 1{\tt t}-{\tt x})-(\frac 1{\tt t}-{\tt x})^q}\kern 1cm \text{(by \eqref{5.1})}\cr
&=\frac {-{\tt t}^{q-1}}{1-{\tt t}^{q-1}-({\tt x}^q-{\tt x}){\tt t}^q}\cr
&=\sum_{n\ge 0} g_{n,q}({\tt x}^q-{\tt x}){\tt t}^n,
\end{split}
\]
where $g_{n,q}\in\Bbb F_p[{\tt x}]$ ($p=\text{char}\,\Bbb F_q$) is the polynomial defined by 
\[
\frac {-{\tt t}^{q-1}}{1-{\tt t}^{q-1}-{\tt x}{\tt t}^q}=\sum_{n\ge 0}g_{n,q}{\tt t}^n.
\]
Thus
\[
\sum_{c\in\Bbb F_q}({\tt x}+c)^n=g_{n,q}({\tt x}^q-{\tt x}),
\]
which can also be viewed as the definition of the polynomial $g_{n,q}$. Recent studies show that the class $g_{n,q}$ contains many new and interesting PPs \cite{FHL,Hou11,Hou12,Hou1,Hou2}. When $g_{n,q}$ is a PP of $\Bbb F_{q^e}$, we call the triple $(n,e;q)$ {\em desirable}. All desirable triples with $e=1$ are known \cite[Theorem~2.1]{FHL}. The complete determination of all desirable triples is a challenging unsolved problem. 
One of the mysterious phenomena observed in the study of the polynomial $g_{n,q}$ is that among the known desirable triples $(n,e;q)$, $n$ frequently appears in the form $q^\alpha-q^\beta-1$. Here is a summary of the previous state of knowledge of the desirable triples $(n,e;q)$ with $n=q^\alpha-q^\beta-1$.

Assume that $e\ge 2$, $n>0$, and $n\equiv q^\alpha-q^\beta-1\pmod{q^{pe}-1}$, where $0\le \alpha,\beta<pe$. (By \cite[Proposition~2.4]{Hou12}, it suffices to consider $n$ modulo $q^{pe}-1$, hence it suffices to consider $0\le\alpha,\beta<pe$.)

\begin{enumerate}
  \item If $\alpha<\beta$, then $(n,e;q)$ is desirable if and only if $(n',e;q)$ is desirable, where $n'=q^{\alpha'}-q^{\beta'}-1$, $\alpha'=pe-\alpha-\beta$, $\beta'=pe-\beta$. (So we may assume $\beta\le \alpha$.) \cite[\S5]{FHL}
  \item If $\beta=\alpha$, then $(n,e;q)$ is desirable if and only if $q>2$. \cite[\S5]{FHL}
  \item\label{itm3} If $0=\beta<\alpha$ and $q$ is even, then $(n,e;q)$ is desirable if and only if $\alpha=3$ and $q=2$, or $\alpha=2$ and $\text{gcd}(q-2,q^e-1)=1$. \cite[\S5]{FHL}
  \item\label{itm4}  Assume $0=\beta<\alpha$ and $q$ is odd. If $\alpha\le 2$, $(n,e;q)$ is desirable if and only if $\alpha=2$ and $\text{gcd}(q-2,q^e-1)=1$; if $\alpha>2$, it was conjectured that $(n,e;q)$ is not desirable. \cite[\S5]{FHL}
  \item\label{itm5} If $(\beta,\alpha)=(1,2)$, then $(n,e;q)$ is desirable if and only if $\text{gcd}(q-2,q^e-1)=1$. \cite[Corollary~5.2]{FHL}
  \item\label{itm6} If $0<\beta<\alpha$ and $\alpha\equiv\beta\equiv 0\pmod e$, then $(n,e;q)$ is desirable. \cite[Theorem~5.3]{FHL}
  \item\label{itm7} Assume $e\ge 3$ and $0<\beta<\alpha$. It was conjectured that the only desirable triples are those in \eqref{itm5} and \eqref{itm6}. \cite[Conjecture~5.5]{FHL} 
  \item\label{itm8}  Assume that $e=2$, $0<\beta<\alpha$, and $\beta$ is even. Then $(n,2;q)$ is desirable if and only if $\alpha$ is even. \cite[Remark~5.4]{FHL}
  \item\label{itm9} Assume that $e=2$, $q$ is odd, and $\beta =p$. If $\alpha=p+2i$, $0<i\le \frac 12(p-1)$, then $(n,2;q)$ is desirable if and only if $4i\not\equiv 1\pmod p$; if $\alpha=p+2i-1$, $0<i\le \frac 12(p-1)$, then $(n,2;q)$ is desirable if and only if $4i\not\equiv 3\pmod p$. \cite[Theorems~5.6, 5.7]{FHL}
  \item\label{itm10} If $e=2$, $q$ is even, and $(\beta,\alpha)=(1,3)$, then $(n,2;q)$ is desirable if and only $q\equiv 1\pmod 3$. \cite[Theorem~5.9]{FHL} 
  \item  Assume that $e=2$, $q>2$, $(\beta,\alpha)=(1,2i)$, $i>0$. Then $(n,2;q)$ is desirable if and only if one of the following holds.
  \begin{itemize}  
    \item [(i)] $2i\equiv 1\pmod p$ and $q\equiv 1\pmod 4$;
    \item [(ii)] $2i\equiv -1\pmod p$ and $q\equiv\pm 1\pmod{12}$;
    \item [(iii)] $4i\equiv 1\pmod p$ and $q\equiv -1\pmod 6$.
  \end{itemize} 
\cite[Theorem~4.1]{Hou1}
  \item Assume that $e=2$, $q$ is odd, $(\beta,\alpha)=(1,2i+1)$, $i>0$. Then $(n,e;q)$ is desirable if and only $p\equiv 1$ or $3\pmod 8$, $q\equiv 1\pmod 8$, and $i^2=-\frac 12$. \cite[Corollary~6.1]{Hou2}
\end{enumerate}   

For $e\ge 3$, there was little activity as indicated by statement~\eqref{itm7}. For $e=2$, the situation appeared to be chaotic. In fact, computer search produced many desirable triples with $e=2$ (and $n=q^\alpha-q^\beta-1$) that are not covered by the above results; see \cite[Table 1]{FHL}. The case $(q^\alpha-q^\beta-1,2;q)$, which seemed hopeless till now, will be completely resolved in the next section.
When $n=q^\alpha-q^\beta-1$, $0\le \beta<\alpha$, the polynomial function $g_{n,q}(x)$ on $\Bbb F_{q^2}$ can be transformed into the form $Ax+Bx^q+Cx^{2q-1}$ through an invertible change of variable. Thus Theorems~A and B allow us to determine all desirable triples of the form $(q^\alpha-q^\beta-1,2;q)$, $0\le \beta<\alpha$. We note that for even $q$, all desirable triples $(q^\alpha-q^\beta-1,2;q)$ are already determined by a combination of some of the above statements, so Theorem~B is not necessary for this purpose.   


\section{Theorems~C and D}

\begin{lem}\label{L6.1} Assume $q>2$. Let $n=q^\alpha-q^\beta-1$, where $0<\beta<\alpha<2p$, $\beta$ is odd, and $\beta\ne p$. Write $\alpha-\beta=a_0+2a_1$, $0\le a_0\le 1$, and $\beta=1+2b_1$. Then
\begin{equation}\label{6.1} 
g_{n,q}(x)=A\phi(x)+B\phi(x)^q+C\phi(x)^{2q-1}\qquad\text{for all}\ x\in\Bbb F_{q^2},
\end{equation}
where $\phi$ is a permutation of $\Bbb F_{q^2}$ and
\begin{equation}\label{6.2}
\begin{cases}
\displaystyle A=\frac 1\beta(-a_0b_1+b_1+a_1),\cr
\displaystyle B=a_0-\frac{b_1+1}\beta,\cr
\displaystyle C=-\frac 1\beta(a_0b_1+a_0+a_1).
\end{cases}
\end{equation}
\end{lem}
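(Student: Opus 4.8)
The plan is to compute the polynomial function $g_{n,q}$ on $\Bbb F_{q^2}$ directly from the defining functional equation, then strip off the part that is constant modulo ${\tt x}^{q^2}-{\tt x}$ and identify the surviving monomials as a scalar multiple of ${\tt x}+B'{\tt x}^q+C'{\tt x}^{2q-1}$ after an invertible substitution $\phi$. First I would use the identity $g_{n,q}({\tt x}^q-{\tt x})=\sum_{c\in\Bbb F_q}({\tt x}+c)^n$ and expand $({\tt x}+c)^n$ with $n=q^\alpha-q^\beta-1$. Since we are only interested in the \emph{function} $g_{n,q}(x)$ for $x\in\Bbb F_{q^2}$, I may reduce all exponents modulo $q^2-1$; writing $n=q^\alpha-q^\beta-1$ and reducing $\alpha,\beta$ modulo $2$ (using $x^{q^2}=x$ on $\Bbb F_{q^2}$), the exponent collapses so that $q^\alpha\equiv q^{a_0}$ and $q^\beta\equiv q$, reflecting the hypotheses $\alpha-\beta=a_0+2a_1$ and $\beta=1+2b_1$. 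This is exactly why $a_0$, $a_1$, $b_1$ enter the formulas for $A,B,C$.

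The key computational step is to substitute $u={\tt x}^q-{\tt x}$ (so that on $\Bbb F_{q^2}$ one has $u^q=-u$) and track how each factor $x^{q^i}$ reduces. Concretely, for $w\in\Bbb F_{q^2}$ one has $w^q=w^q$ and $w^{q^2}=w$, so the monomial $w^{q^\alpha-q^\beta-1}$ becomes a product of powers of $w$ and $w^q$ whose total degree is controlled by the parities governed by $a_0$ and $b_1$; the coefficients $\frac1\beta(\cdots)$, etc., will arise from differentiating or from the binomial contributions of the "middle" terms that survive the sum over $c\in\Bbb F_q$. The natural route is to recognize, as in \cite{Hou1,Hou2}, that $\sum_{c\in\Bbb F_q}({\tt x}+c)^n$ picks out precisely those terms in the multinomial expansion whose exponents are $\equiv 0\pmod{q-1}$ and nonzero, since $\sum_{c\in\Bbb F_q}c^j=-1$ when $(q-1)\mid j$, $j>0$, and $0$ otherwise. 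Carrying this out yields a polynomial in $w={\tt x}^q-{\tt x}$ of the shape $A w+Bw^q+Cw^{2q-1}$ up to the invertible map $\phi(x)=x^q-x$ composed with suitable normalization; setting $\phi$ to be this (linear, hence permutation) change of variable gives \eqref{6.1}.

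I expect the main obstacle to be the bookkeeping that produces the exact rational coefficients in \eqref{6.2}. The denominators $\beta$ strongly suggest that a derivative (or the Hasse-derivative technique used in Lemma~\ref{L4.2}) is the cleanest way to extract them: one writes the generating-function form $\frac{-{\tt t}^{q-1}}{1-{\tt t}^{q-1}-{\tt x}{\tt t}^q}$ and reads off the coefficient of ${\tt t}^n$, or equivalently one differentiates $({\tt x}+c)^{q^\alpha-q^\beta-1}$ and uses $q^\alpha-q^\beta-1\equiv -1\pmod p$-type congruences to evaluate binomial coefficients such as $\binom{n}{k}$ modulo $p$ in closed form. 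The delicate point is handling the interaction of the three residues $a_0,a_1,b_1$ correctly so that the three surviving coefficients separate exactly as $A,B,C$; the parity conditions ($\beta$ odd, $\beta\ne p$, $\alpha<2p$) are precisely what guarantee no unexpected collapses or extra $p$-divisibilities occur. Once the coefficients are pinned down and $\phi$ is verified to be a permutation (it is affine in $x$, so this is immediate), the lemma follows.
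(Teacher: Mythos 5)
Your proposal founders on its very first reduction. You claim that, because you only care about the function $g_{n,q}$ on $\Bbb F_{q^2}$, you may reduce $n$ modulo $q^2-1$, so that $q^\alpha\equiv q^{a_0}$, $q^\beta\equiv q$ and only the parities of $\alpha,\beta$ survive. This is false, and the lemma itself refutes it: the coefficients $A,B,C$ in \eqref{6.2} depend on the actual integers $a_1$ and $b_1$, not just on the parities $a_0$ and $\beta\bmod 2$, which would be impossible if your reduction were legitimate. The source of the error is that the functional equation $\sum_{c\in\Bbb F_q}({\tt x}+c)^n=g_{n,q}({\tt x}^q-{\tt x})$, evaluated at $x\in\Bbb F_{q^2}$, only determines $g_{n,q}$ on the image of $x\mapsto x^q-x$, which is the $q$-element set $\{u\in\Bbb F_{q^2}:\text{Tr}_{q^2/q}(u)=0\}$ — not on all of $\Bbb F_{q^2}$. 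To pin down $g_{n,q}(u)$ for arbitrary $u\in\Bbb F_{q^2}$ one must take $x$ with $x^q-x=u$, and such $x$ lie in $\Bbb F_{q^{2p}}$ (since $x^{q^2}-x=\text{Tr}_{q^2/q}(u)\in\Bbb F_q$); consequently the correct period of $n$ is $q^{2p}-1$, not $q^2-1$ (this is exactly \cite[Proposition~2.4]{Hou12}, and it is why the lemma assumes $\alpha<2p$ rather than $\alpha<2$). Your subsequent bookkeeping, which keys only on $u^q=-u$, therefore cannot produce \eqref{6.2}; the hard content that replaces it in the paper is the closed-form expression \cite[(5.3)]{FHL} for $g_{n,q}(x)$ on $\Bbb F_{q^2}^*$ in terms of the partial-trace polynomials $S_a$, in which $a_1$ and $b_1$ enter as multipliers.

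A second, independent error: your $\phi(x)=x^q-x$ is not a permutation of $\Bbb F_{q^2}$ — it is additive with kernel $\Bbb F_q$, so it is $q$-to-one onto the trace-zero set — and your closing remark that $\phi$ "is affine in $x$, so this is immediate" is therefore wrong on both counts. The permutation that actually works is not affine at all: starting from \cite[(5.3)]{FHL} one substitutes $y=x^{-1}$ and then $z=(b_1+1)y^q+b_1y$, whose inverse is $y=\frac1\beta\bigl((b_1+1)z^q-b_1z\bigr)$ (here $\beta=1+2b_1\not\equiv 0\pmod p$ is what makes this linearized map invertible, and is where the denominators $\beta$ in \eqref{6.2} come from — not from any differentiation). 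The resulting $\phi(x)=(b_1+1)x^{-q}+b_1x^{-1}$, extended by $\phi(0)=0$, is a permutation because it is the composite of inversion with an invertible linearized polynomial. Without the input of \cite[(5.3)]{FHL} (or an equivalent re-derivation over $\Bbb F_{q^{2p}}$) and with the incorrect choice of $\phi$, the proposal cannot be completed as written.
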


\begin{proof}
For every integer $a\ge 0$, define $S_a={\tt x}+{\tt x}^q+\cdots+{\tt x}^{q^{a-1}}\in\Bbb F_p[{\tt x}]$. Let $x\in\Bbb F_{q^2}^*$. By \cite[(5.3)]{FHL},
\[
\begin{split}
g_{n,q}(x)\,&=-x^{q^2-2}-x^{q^2-q-2}\bigl(a_1S_2(x)+S_{a_0}(x)^q\bigr)\bigl((b_1S_2(x)+S_1(x))^{q-1}-1\bigr)\cr
&=-x^{-1}-x^{-q-1}\bigl(a_1x+(a_0+a_1)x^q\bigr)\bigl[\bigl((b_1+1)x+b_1x^q\bigr)^{q-1}-1\bigr]\cr
&=-y-\bigl(a_1y^q+(a_0+a_1)y\bigr)\bigl[\bigl((b_1+1)y^q+b_1y\bigr)^{q-1}-1\bigr],
\end{split}
\]
where $y=x^{-1}$. Note that $(b_1+1){\tt x}^q+b_1{\tt x}$ is a PP of $\Bbb F_{q^2}$ whose inverse on $\Bbb F_{q^2}$ is given by $\frac 1\beta\bigl((b_1+1){\tt x}^q-b_1{\tt x}\bigr)$. 

Let $z=(b_1+1)y^q+b_1y$. Then $y=\frac 1\beta\bigl((b_1+1)z^q-b_1z\bigr)$. We have 
\[
\begin{split}
g_{n,q}(x)=\,&-\frac 1\beta\bigl((b_1+1)z^q-b_1z\bigr)\cr
&=-\Bigl[a_1\frac 1\beta\bigl((b_1+1)z-b_1z^q\bigr)+(a_0+a_1)\frac 1\beta\bigl((b_1+1)z^q-b_1z\bigr)\Bigr](z^{q-1}-1)\cr
&=Az+Bz^q+Cz^{2q-1}.
\end{split}
\]
Extend the mapping $x\mapsto z$ to a bijection $\phi:\Bbb F_{q^2}\to\Bbb F_{q^2}$ by setting $\phi(0)=0$. Then \eqref{6.1} holds.
\end{proof}

\noindent{\bf Theorem C.} 
{\em Let $q$ be even and $n=q^\alpha-q^\beta-1$, where $0\le \beta<\alpha<2\cdot 2$. Then $(n,2;q)$ is desirable if and only if one of the following occurs.
\begin{itemize}
  \item [(i)] $q\equiv 1\pmod 3$, $(\beta,\alpha)=(0,2),\ (1,2),\ (1,3)$.
  \item [(ii)] $q=2$, $(\beta,\alpha)=(0,3)$.
\end{itemize}  
}

\begin{proof}
The conclusion follows from statements~\eqref{itm3}, \eqref{itm5}, \eqref{itm8}, \eqref{itm10} in Section~\ref{s5}.
\end{proof}

\noindent{\bf Theorem D.} 
{\em Let $q$ be odd and $n=q^\alpha-q^\beta-1$, where $0\le \beta<\alpha<2p$. Then $(n,2;q)$ is desirable if and only if one of the following occurs.
\begin{itemize}
  \item [(i)] $q\equiv 1\pmod 3$, $(\beta,\alpha)=(0,2)$.
  \item [(ii)] $\beta>0$, $\beta\equiv \alpha\equiv 0\pmod 2$.
  \item [(iii)] $(\beta,\alpha)=(p,p+i)$, $0<i<p$, $2i\not\equiv (-1)^i\pmod p$.
  \item [(iv)] $\beta\ne p$, $\beta=1+2b_1$, $\alpha-\beta=a_0+2a_1$, $a_0,a_1,b_1\in\Bbb N$, $0\le a_0\le 1$, and one of the following is satisfied.
  \begin{itemize}  
    \item [(iv.1)] $(a_1+b_1)(2a_1+b_1)+a_0(a_1-2a_1b_1-b_1^2)$ is a square in $\Bbb F_q^*$ and
\[
1+2b_1+2a_1^2+a_1b_1+a_0\bigl(-1-2b_1+b_1^2+a_1(3+2b_1)\bigr)\equiv 0\pmod p.
\]
    \item [(iv.2)] 
\[
\begin{cases}
a_0+2a_2+b_1\equiv 0\pmod p,\cr 
(1+b_1)^2-4a_1^2-a_0\bigl(5+10b_1+4b_1^2+8a_1(1+b_1)\bigr)\equiv 0\pmod p.
\end{cases}
\]
    \item [(iv.3)] 
\[
\begin{cases}
a_0=1,\ b_1=0,\cr 
4a_1+3\equiv 0\pmod p,\cr 
q\equiv -1\pmod 6.
\end{cases}
\]
    \item [(iv.4)] 
\[
\begin{cases}
a_0=1,\ a_1=0,\ b_1=0,\cr
q\equiv 1,3\pmod 6.
\end{cases}
\]
\end{itemize}   
\end{itemize}  
}

\begin{proof}
{\bf Case 1.} Assume $\beta=0$. We show that $(n,2;q)$ is desirable if and only if $\alpha=2$ and $q\equiv 1\pmod 3$. The ``if'' part follows from statement~\eqref{itm4} in Section~\ref{s5}. To prove the ``only if'' part, by statement~\eqref{itm4} in Section~\ref{s5}, it suffices to show that $(n,2;q)$ is not desirable for $\alpha>2$. 

Write $\alpha =a_0+2a_1$, $a_0,a_1\in\Bbb N$, $0\le a_0\le 1$. By \cite[(5.1)]{FHL}, for all $x\in\Bbb F_{q^2}$ we have 
\[
\begin{split}
g_{n,q}(x)\,&=x^{q-2}+x^{q^2-2}+\cdots+x^{q^{\alpha-1}-2}\cr
&= a_1(x^{q-2}+x^{q^2-2})+(a_0-1)x^{q^2-2}\cr
&=a_1x^{q-2}+(a_0+a_1-1)x^{q^2-2}\cr
&=(a_0+a_1-1)y^q+a_1y^{2q-1},
\end{split}
\]
where $y^q=x^{q^2-2}$. Note that $0<a_1<p$ and $0<a_0+a_1-1<p$, so $a_1\not\equiv 0\pmod p$ and $a_0+a_1-1\not\equiv 0\pmod p$. By Theorem~A, $(a_0+a_1-1){\tt x}^q+a_1{\tt x}^{2q-1}$ is a not PP of $\Bbb F_{q^2}$. So $g_{n,q}$ is not a PP of $\Bbb F_{q^2}$.

\medskip

{\bf Case 2.} Assume $\beta>0$ and $\beta\equiv 0\pmod 2$. By statement~\eqref{itm8} in Section~\ref{s5}, $(n,2;q)$ is desirable if and only if (ii) holds.

\medskip
{\bf Case 3.} Assume $\beta=p$. By statement~\eqref{itm9} in Section~\ref{s5}, $(n,2;q)$ is desirable if and only if (iii) holds.

\medskip
{\bf Case 4.} Assume $\beta\not\equiv 0\pmod 2$ and $\beta\ne p$. Write $\beta=1+2b_1$ and $\alpha-\beta=a_0+2a_1$, $a_0,a_1,b_1\in\Bbb N$, $0\le a_0\le 1$. By Lemma~\ref{L6.1}, $(n,2;q)$ is desirable if and only if $A{\tt x}+B{\tt x}^q+C{\tt x}^{2q-1}$ is a PP of $\Bbb F_{q^2}$, where $A,B,C$ are given by \eqref{6.2}. We claim that $C\ne 0$ in $\Bbb F_p$, i.e., $a_0b_1+a_0+a_1\not\equiv 0\pmod p$. In fact,
\[
0<a_0b_1+a_0+a_1\le a_1+b_1+1=\frac 12(1+2a_1+1+2b_1)\le\frac 12(\alpha-\beta+\beta)=\frac 12 \alpha<p.
\]
Thus $(n,2;q)$ is desirable if and only if $\frac AC{\tt x}+\frac BC{\tt x}^q+{\tt x}^{2q-1}$ is a PP of $\Bbb F_{q^2}$, which happens if and only if one of the conditions in Theorem~A holds with $a=\frac AC$ and $b=\frac BC$. Letting 
\[
a=\frac AC=\frac{a_0b_1-b_1-a_1}{a_0b_1+a_0+a_1},
\]
\[
b=\frac BC=\frac{-2a_0b_1-a_0+b_1+1}{a_0b_1+a_0+a_1}
\]
in Theorem~A, then conditions (i) -- (iv) in Theorem~A become (iv.1) -- (iv.4) in Theorem~D.
\end{proof}
 

\section{A Final Remark}

Let $f=a{\tt x}+b{\tt x}^q+{\tt x}^{2q-1}\in\Bbb F_q[{\tt x}]$, $ab\ne 0$. For $0\le s<q^2-1$, we saw in Section~\ref{s3.3} that $\sum_{x\in\Bbb F_{q^2}}f(x)^s=0$ unless $s=\alpha+\beta q$, $0\le\alpha,\beta\le q-1$, $\alpha+\beta=q-1$. 

Let $z=-\frac a{b^2}$ and assume that ${\tt x}^2+{\tt x}-z$ has two distinct roots in $\Bbb F_q$. By \eqref{3.16.0} and \eqref{3.17}, which hold for both odd and even $q$'s, we have
\begin{equation}\label{7.1}
\sum_{x\in\Bbb F_{q^2}}f(x)^{0+(q-1)q}=0,
\end{equation} 
\begin{equation}\label{7.2}
\sum_{x\in\Bbb F_{q^2}}f(x)^{1+(q-2)q}=\frac{2(1-a)(b^2-a^2-3a)}{a^2(b^2-4a)}.
\end{equation}
The sum $\sum_{x\in\Bbb F_{q^2}}f(x)^{2+(q-3)q}$ was computed in Subsection~\ref{s4.3} for even $q\ge 4$. That computation can be adapted for an arbitrary $q$ resulting in the following formula:
\begin{equation}\label{7.3}
\sum_{x\in\Bbb F_{q^2}}f(x)^{2+(q-3)q}=\frac{3b(1-a)(b^2-a^2-3a)(9a-6a^2+a^3-2b^2+ab^2)}{a^4(b^2-4a)^2},\quad q\ge 3.
\end{equation}
(The computation of \eqref{7.3}, which is quite lengthy and tedious, is given in the appendix.) Note that the sums \eqref{7.1} -- \eqref{7.3} are rational functions in $a$, $b$, independent of $q$, with coefficients in $\Bbb Z$. Moreover, the factor $b^2-a^2-3a$ appears in the numerator of each these three rational functions. In fact, this is true in general. Following the idea behind the computations in Subsections~\ref{s3.3} and \ref{s4.3}, it is not difficult to be convinced that for every $0\le \alpha\le q-1$, the sum $\sum_{x\in\Bbb F_{q^2}}f(x)^{\alpha+(q-1-\alpha)q}$ should be a rational function $R_\alpha(a,b)$ in $a,b$, independent of $q$, with coefficients in $\Bbb Z$, although we do not know the explicit expression of $R_\alpha(a,b)$ for a general $\alpha$. Since we already assumed that ${\tt x}^2+{\tt x}-z$ has two distinct roots in $\Bbb F_q$, by Theorems~A and B, the condition $b^2-a^2-3a=0$ implies that $f$ is a PP of $\Bbb F_{q^2}$, which further implies that $R_\alpha(a,b)=0$ for all $0\le \alpha\le q-1$. Hence $b^2-a^2-3a$ is a factor of the numerator of the reduced form of $R_\alpha(a,b)$. Two questions arise: Is it possible to compute $R_\alpha(a,b)$ explicitly for all $0\le\alpha\le q-1$? If $R_\alpha(a,b)$ is too complicated to compute, is there any more direct explanation why $b^2-a^2-3a$ always appears in the numerator of $R_\alpha(a,b)$?

\newpage

\appendix
\section{Computation of \eqref{7.3}}

Let $z=-\frac a{b^2}$. We have
\[
\begin{split}
&-\sum_{x\in\Bbb F_{q^2}}f(x)^{2+(q-3)q}\cr
=\,&\sum_{q-2+i+k-j-l=0,q+1}\binom 2i\binom ik\binom{q-3}j\binom jl a^{-i-j}b^{i+j-k-l}\cr
=\,&\sum_{q-2-j-l=0,q+1}\binom{q-3}j\binom jl a^{-j}b^{j-l}+\sum_{q-1-j-l=0,q+1}2\binom{q-3}j\binom jl a^{-1-j}b^{1+j-l}\cr
&+\sum_{q-j-l=0,q+1}2\binom{q-3}j\binom jl a^{-1-j}b^{j-l}+\sum_{q-j-l=0,q+1}\binom{q-3}j\binom jl a^{-2-j}b^{2+j-l}\cr 
&+\sum_{q+1-j-l=0,q+1}2\binom{q-3}j\binom jl a^{-2-j}b^{1+j-l}+\sum_{q+2-j-l=0,q+1}\binom{q-3}j\binom jl a^{-2-j}b^{j-l}\cr
=\,&\sum_{q-2-j-l=0}\binom{q-3}j\binom jl a^{-j}b^{j-l}+\sum_{q-1-j-l=0}2\binom{q-3}j\binom jl a^{-1-j}b^{1+j-l}\cr
&+\sum_{q-j-l=0}\binom{q-3}j\binom jl (2a^{-1-j}b^{j-l}+a^{-2-j}b^{2+j-l})\cr
&+\sum_{q+1-j-l=0,q+1}2\binom{q-3}j\binom jl a^{-2-j}b^{1+j-l}+2a^{-2}b\cr
&+\sum_{q+2-j-l=0,q+1}\binom{q-3}j\binom jl a^{-2-j}b^{j-l}+\sum_{j+l=1}\binom{q-3}j\binom jl a^{-2-j}b^{j-l}\cr
=\,&\sum_{1\le l\le\frac q2-1}\binom{q-3}{q-2-l}\binom{q-2-l}la^{-(q-2-l)}b^{q-2-2l}\cr
&+\sum_{2\le l\le\frac {q-1}2}2\binom{q-3}{q-1-l}\binom{q-1-l}la^{-1-(q-1-l)}b^{q-2l}\cr
&+\sum_{3\le l\le\frac q2}\binom{q-3}{q-l}\binom{q-l}l(2a^{-1-(q-l)}b^{q-2l}+a^{-2-(q-l)}b^{2+q-2l})\cr
&+\sum_{4\le l\le\frac {q+1}2}2\binom{q-3}{q+1-l}\binom{q+1-l}la^{-2-(q+1-l)}b^{2+q-2l}+2a^{-2}b\cr
&+\sum_{5\le l\le\frac q2+1}\binom{q-3}{q+2-l}\binom{q+2-l}la^{-2-(q+2-l)}b^{q+2-2l}-3a^{-3}b\cr
=\,&\sum_{1\le l\le\frac q2-1}(-1)^{l+1}\binom{q-l}2\binom{-2-l}la^{1+l}b^{-1-2l}\cr
&+\sum_{2\le l\le\frac {q-1}2}2(-1)^l\binom{q+1-l}2\binom{-1-l}la^{-1+l}b^{1-2l}\cr
\end{split}
\]
\[
\begin{split}
&+\sum_{3\le l\le\frac q2}(-1)^{l+1}\binom{q+2-l}2\binom{-l}l(2a^{-2+l}b^{1-2l}+a^{-3+l}b^{3-2l})\cr
&+\sum_{4\le l\le\frac {q+1}2}2(-1)^l\binom{q+3-l}2\binom{1-l}la^{-4+l}b^{3-2l}\cr
&+\sum_{5\le l\le\frac q2+1}(-1)^{l+1}\binom{q+4-l}2\binom{2-l}l a^{-5+l}b^{3-2l}+2a^{-2}b-3a^{-3}b\cr
=\,&-\frac ab\sum_{1\le l\le\frac q2-1}\binom{-l}2\binom{-l-2}l z^l
+2\frac ba \sum_{2\le l\le\frac {q-1}2}\binom{-l+1}2\binom{-l-1}l z^l\cr
&-\Bigl(2\frac b{a^2}+\frac{b^3}{a^3}\Bigr)\sum_{3\le l\le\frac q2}\binom{-l+2}2\binom{-l}l z^l
+2\frac{b^3}{a^4}\sum_{4\le l\le\frac {q+1}2}\binom{-l+3}2\binom{-l+1}l z^l\cr
&-\frac{b^3}{a^5}\sum_{5\le l\le\frac q2+1}\binom{-l+4}2\binom{-l+2}l z^l+2\frac b{a^2}-3\frac b{a^3}\cr
=\,&\frac ab\sum_{1\le l\le\frac q2-1}\binom{-l}2\binom{-l-1}{l+1} z^l+4\frac ba \sum_{2\le l\le\frac {q-1}2}\binom{-l+1}2\binom{-l}l z^l\cr
&-\Bigl(2\frac b{a^2}+\frac{b^3}{a^3}\Bigr)\biggl[\,\sum_{0\le l\le\frac q2}\binom{-l+2}2\binom{-l}l z^l-1\,\biggr]-2\frac b{a^3}\sum_{3\le l\le\frac {q-1}2}\binom{-l+2}2\binom{-l}{l+1} z^l\cr
&-\frac{b^3}{a^5}\sum_{5\le l\le\frac q2+1}\Bigl[\frac 12 l(l+5)-6(l-1)\Bigr]\binom{-l+2}l z^l+2\frac b{a^2}-3\frac b{a^3}\cr
=\,&-b\sum_{2\le l\le\frac q2}\binom{-l+1}2\binom{-l}l z^l+4\frac ba \sum_{2\le l\le\frac q2}\binom{-l+1}2\binom{-l}l z^l\cr
&-\Bigl(2\frac b{a^2}+\frac{b^3}{a^3}\Bigr)\sum_{0\le l\le\frac q2}\binom{-l+2}2\binom{-l}l z^l+2\frac b{a^2}+\frac{b^3}{a^3}\cr
&-2\frac b{a^3}\sum_{0\le l\le\frac {q-1}2}\binom{-l+2}2\binom{-l}{l+1} z^l\cr
&-\frac{b^3}{a^5}\sum_{5\le l\le\frac q2+1}\Bigl[\frac{(l+5)(-l+2)}2\binom{-l+1}{l-1}+3(-l+2)\binom{-l+1}l\Bigr]z^l+2\frac b{a^2}-3\frac b{a^3}\cr
=\,&\Bigl(-b+4\frac ba\Bigr)\sum_{0\le l\le\frac q2}\binom{-l+1}2\binom{-l}l z^l-\Bigl(2\frac b{a^2}+\frac{b^3}{a^3}\Bigr)\sum_{0\le l\le\frac q2}\binom{-l+2}2\binom{-l}l z^l\cr
&-2\frac b{a^3}\sum_{0\le l\le\frac {q-1}2}\binom{-l+2}2\binom{-l}{l+1} z^l\cr
&+\frac b{a^4}\sum_{4\le l\le\frac q2}\Bigl[\frac{(l+6)(-l+1)}2\binom{-l}l+3(-l+1)\binom{-l}{l+1}\Bigr]z^l+4\frac b{a^2}+\frac{b^3}{a^3}-3\frac b{a^3}\cr
\end{split}
\]
\[
\begin{split}
=\,&\Bigl(-b+4\frac ba\Bigr)\sum_{0\le l\le\frac q2}\binom{-l+1}2\binom{-l}l z^l-\Bigl(2\frac b{a^2}+\frac{b^3}{a^3}\Bigr)\sum_{0\le l\le\frac q2}\binom{-l+2}2\binom{-l}l z^l\cr
&-2\frac b{a^3}\sum_{0\le l\le\frac {q-1}2}\binom{-l+2}2\binom{-l}{l+1} z^l\cr
&+\frac b{a^4}\biggl[\sum_{0\le l\le\frac q2}\frac{(l+6)(-l+1)}2\binom{-l}l z^l-3+12z^2-90z^3\biggr]\cr
&+3\frac b{a^4}\biggl[\sum_{0\le l\le\frac {q-1}2}(-l+1)\binom{-l}{l+1}z^l-4z^2+30z^3\biggr]+4\frac b{a^2}+\frac{b^3}{a^3}-3\frac b{a^3}\cr
=\,&\Bigl(-b+4\frac ba\Bigr)\sum_{0\le l\le\frac q2}\Bigl[\binom{l+2}2-2(l+1)+1\Bigr]\binom{-l}lz^l\cr
&-\Bigl(2\frac b{a^2}+\frac{b^3}{a^3}\Bigr)\sum_{0\le l\le\frac q2}\Bigl[\binom{l+2}2-3(l+1)+3\Bigr]\binom{-l}l z^l\cr
&-2\frac b{a^3}\sum_{0\le l\le\frac {q-1}2}\Bigl[\binom{l+2}2-3(l+1)+3\Bigr]\binom{-l}{l+1} z^l\cr
&-\frac b{a^4}\sum_{0\le l\le\frac q2}\Bigl[\binom{l+2}2+(l+1)-5\Bigr]\binom{-l}l z^l-3\frac b{a^4}\cr
&-3\frac b{a^4}\sum_{0\le l\le\frac {q-1}2}\bigl[(l+1)-2\bigr]\binom{-l}{l+1}z^l+4\frac b{a^2}+\frac{b^3}{a^3}-3\frac b{a^3}\cr
=\,&\Bigl(-b+4\frac ba-2\frac b{a^2}-\frac{b^3}{a^3}-\frac b{a^4}\Bigr)\sum_{0\le l\le\frac q2}\binom{l+2}2\binom{-l}lz^l\cr
&+\Bigl(2b-8\frac ba+6\frac b{a^2}+3\frac{b^3}{a^3}-\frac b{a^4}\Bigr)\sum_{0\le l\le\frac q2}(l+1)\binom{-l}lz^l\cr
&+\Bigl(-b+4\frac ba-6\frac b{a^2}-3\frac{b^3}{a^3}+5\frac b{a^4}\Bigr)\sum_{0\le l\le\frac q2}\binom{-l}lz^l-2\frac b{a^3}\sum_{0\le l\le\frac {q-1}2}\binom{l+2}2\binom{-l}{l+1} z^l\cr
&+\Bigl(6\frac b{a^3}-3\frac b{a^4}\Bigr)\sum_{0\le l\le\frac {q-1}2}(l+1)\binom{-l}{l+1} z^l+\Bigl(-6\frac b{a^3}+6\frac b{a^4}\Bigr)\sum_{0\le l\le\frac {q-1}2}\binom{-l}{l+1} z^l\cr
&+4\frac b{a^2}+\frac{b^3}{a^3}-3\frac b{a^3}-3\frac b{a^4}.
\end{split}
\]
Eq.~\eqref{7.3} is obtained by making the following substitutions.
\[
\sum_{0\le l\le\frac q2}\binom{-l}lz^l=1, \kern 3.8cm \text{(Lemma~\ref{L3.1})}
\]
\[
\sum_{0\le l\le\frac q2}(l+1)\binom{-l}lz^l=\frac{1+3z}{1+4z},\kern 2cm \text{(Lemma~\ref{L3.3})}
\]
\[
\sum_{0\le l\le\frac q2}\binom{l+2}2\binom{-l}lz^l=\frac{1+6z+11z^2}{(1+4z)^2}\qquad q>2, \kern 1cm\text{(Lemma~\ref{L4.2})}
\]
\[
\sum_{0\le l\le\frac {q-1}2}\binom{-l}{l+1}z^l=1, \kern 5.2cm\text{(Lemma~\ref{L3.2})}
\]
\[
\sum_{0\le l\le\frac {q-1}2}(l+1)\binom{-l}{l+1}z^l=\frac{2z}{1+4z},\kern 3.5cm\text{(Lemma~\ref{L3.2})}
\]
\[
\sum_{0\le l\le\frac {q-1}2}\binom{l+2}2\binom{-l}{l+1}z^l=\frac{3z(1+2z)}{(1+4z)^2}, \qquad q>2. \kern 1cm\text{(Lemma~\ref{LA1})}
\]

\begin{lem}\label{LA1}
Let $z\in\Bbb F_q^*$ and assume that ${\tt x}^2+{\tt x}-z$ has two distinct roots in $\Bbb F_q$. Then 
\[
\sum_{0\le l\le \frac{q-1}2}\binom{l+2}2\binom{-l}{l+1}z^l=
\begin{cases}
0&\text{if}\ q=2,\vspace{2mm}\cr
\displaystyle\frac{3z(1+2z)}{(1+4z)^2}&\text{if}\ q>2.
\end{cases}
\]
\end{lem}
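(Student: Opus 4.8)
The plan is to follow the proof of Lemma~\ref{L4.2} essentially verbatim, replacing the binomial $\binom{-l}l$ by $\binom{-l}{l+1}$; the only structural change is that the latter is the constant term of ${\tt x}^{-l-1}(1+{\tt x})^{-l}$ rather than of ${\tt x}^{-l}(1+{\tt x})^{-l}$, which inserts one extra factor of ${\tt x}^{-1}$ throughout. First I would dispose of the case $q=2$: the summation range $0\le l\le\frac{q-1}2$ then contains only $l=0$, and since $\binom{-0}{1}=\binom 01=0$ the sum is $0$, as claimed. For $q>2$, I would write
\[
\binom{-l}{l+1}=\text{ct}\Bigl(\frac 1{{\tt x}^{l+1}(1+{\tt x})^l}\Bigr),
\]
so that
\[
\sum_{0\le l\le\frac{q-1}2}\binom{l+2}2\binom{-l}{l+1}z^l=\text{ct}\biggl[\frac 1{\tt x}\sum_{0\le l\le\frac{q-1}2}\binom{l+2}2\Bigl(\frac z{{\tt x}(1+{\tt x})}\Bigr)^l\biggr].
\]

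Next I would extend the inner summation from $\frac{q-1}2$ to $q-3$ so that the Hasse-derivative identity used in the proof of Lemma~\ref{L4.2}, namely $\sum_{0\le l\le q-3}\binom{l+2}2{\tt y}^l=(1-{\tt y}^q)(1-{\tt y})^{-3}$, becomes available with ${\tt y}=\frac z{{\tt x}(1+{\tt x})}$. The extension is legitimate because $\binom{-l}{l+1}=(-1)^{l+1}\binom{2l}{l+1}\equiv 0\pmod p$ for $\frac{q-1}2<l\le q-3$; this vanishing follows from Lucas' theorem (writing $q=p^s$ and comparing the base-$p$ digits of $2l$ and $l+1$), the same truncation argument already implicit in the proofs of Lemmas~\ref{L3.3} and \ref{L4.2}. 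Substituting ${\tt y}$ and using the Frobenius identity $\bigl(\frac 1{\tt x}-\frac 1{1+{\tt x}}\bigr)^q=\frac 1{{\tt x}^q}-\frac 1{(1+{\tt x})^q}$ to split ${\tt y}^q$, exactly as in the derivation of \eqref{4.6}, I would observe that $\frac 1{\tt x}\bigl(\frac{{\tt x}(1+{\tt x})}{{\tt x}^2+{\tt x}-z}\bigr)^3$ is a power series of order ${\tt x}^2$ at the origin, so the terms coming from $1$ and from $\frac z{(1+{\tt x})^q}$ contribute nothing to the constant term. Only the $-\frac z{{\tt x}^q}$ term survives, giving
\[
S=-z\cdot\text{ct}\Bigl[{\tt x}^{-q-1}\Bigl(1+\frac z{{\tt x}^2+{\tt x}-z}\Bigr)^3\Bigr].
\]

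The remaining computation is then identical in spirit to \eqref{4.7}--\eqref{4.9}: writing ${\tt x}^2+{\tt x}-z=({\tt x}-r_1)({\tt x}-r_2)$ with $r_1,r_2\in\Bbb F_q$ distinct, I would expand $\bigl(1+\frac z{({\tt x}-r_1)({\tt x}-r_2)}\bigr)^3$ into partial fractions with $c=\frac 1{r_1-r_2}$ and extract the constant term of each ${\tt x}^{-q-1}({\tt x}-r_i)^{-k}$ using \eqref{4.8}. The only difference from \eqref{4.9} is that the extra factor ${\tt x}^{-1}$ forces me to extract the coefficient of ${\tt x}^{q+1}$ rather than ${\tt x}^q$, so that $\binom{-k}q$ is replaced by $\binom{-k}{q+1}\equiv(-1)^{q+1}k\pmod p$ for $k=1,2,3$ (again by Lucas). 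Substituting the symmetric functions $r_1+r_2=-1$, $r_1r_2=-z$, together with $c^2=(1+4z)^{-1}$, into the resulting expression should collapse everything to $\frac{3z(1+2z)}{(1+4z)^2}$.

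The main obstacle I anticipate is computational bookkeeping rather than anything conceptual: correctly tracking the coefficients $\binom{-k}{q+1}$ modulo $p$ (including the small characteristics $p=2,3$, where the digit comparisons in Lucas' theorem must be verified directly), and carrying out the partial-fraction expansion and the final symmetric-function substitution without sign errors. As with \eqref{3.12} and Lemma~\ref{L4.2}, the pleasant feature is that the unwieldy intermediate expression collapses to the clean rational function stated; evaluating a single numerical instance—for example $q=7$, $z=2$, where both sides equal $4$—provides a reassuring check that the bookkeeping has been done correctly.
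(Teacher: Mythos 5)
Your proposal is correct and follows essentially the same route as the paper's own appendix proof: the constant-term representation of $\binom{-l}{l+1}$, extension of the summation range to $q-3$, reduction to $-z\cdot\mathrm{ct}\bigl[{\tt x}^{-q-1}\bigl(1+\frac z{{\tt x}^2+{\tt x}-z}\bigr)^3\bigr]$, partial-fraction expansion via \eqref{4.7}--\eqref{4.8}, extraction of the coefficient of ${\tt x}^{q+1}$ with $\binom{-k}{q+1}\equiv k\pmod p$ for $q>2$, and the final symmetric-function substitution. The only differences are cosmetic: you make explicit the $q=2$ case and the Lucas/Kummer justification for extending the summation range, both of which the paper leaves implicit.
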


\begin{proof}
The case $q=2$ is trivial, and we assume $q>2$. We have 
\[
\begin{split}
S:\,&=\sum_{0\le l\le \frac{q-1}2}\binom{l+2}2\binom{-l}{l+1}z^l\cr
&=\sum_{0\le l\le q-3}\binom{l+2}2\binom{-l}{l+1}z^l\cr
&=\sum_{0\le l\le q-3}\binom{l+2}2\cdot\text{ct}\Bigl(\frac 1{{\tt x}^{l+1}(1+{\tt x})^l}\Bigr)\cdot z^l\cr
&=\text{ct}\biggl[\frac 1{\tt x}\sum_{0\le l\le q-3}\binom{l+2}2\Bigl(\frac z{{\tt x}(1+{\tt x})}\Bigr)^l\biggr]\cr
&=-z\cdot \text{ct}\Bigl[{\tt x}^{-q-1}\Bigl(1+\frac z{{\tt x}^2+{\tt x}-z}\Bigr)^3\Bigr]\kern 2cm \text{(see \eqref{4.6})}.
\end{split}
\]
Write ${\tt x}^2+{\tt x}-z=({\tt x}-r_1)({\tt x}-r_2)$ and let $c=\frac 1{r_1-r_2}$. By \eqref{4.7} and \eqref{4.8}, we have 
\[
\begin{split}
S=\,&-z\biggl[(3zc-6z^2c^3+6z^3c^5)\bigl((-r_1)^{-q-2}-(-r_2)^{-q-2}\bigr)\binom{-1}{q+1}\cr
&+(3z^2c^2-3z^3c^4)\bigl((-r_1)^{-q-3}+(-r_2)^{-q-3}\bigr)\binom{-2}{q+1}\cr
&+z^3c^3\bigl((-r_1)^{-q-4}-(-r_2)^{-q-4}\bigr)\binom{-3}{q+1}\biggr]\cr
=\,&-z\bigl[(3z-6z^2c^2+6z^3c^4)\cdot c\cdot(-r_1^{-3}+r_2^{-3})+(3z^2c^2-3z^3c^4)(r_1^{-4}+r_2^{-4})\cdot 2\cr
&+z^3c^2\cdot c\cdot(-r_1^{-5}+r_2^{-5})\cdot 3 \bigr] \kern 2cm \text{($\textstyle\binom{-3}{q+1}\equiv 3\pmod p$ for $q>2$)},
\end{split}
\]
where 
\[
c^2=\frac 1{1+4z},
\]
\[
c(r_1^{-3}-r_2^{-3})=\frac 1{r_1-r_2}\cdot\frac{r_2^3-r_1^3}{(r_1r_2)^3}=\frac{-(r_1^2+r_1r_2+r_2^2)}{(-z)^3}
=\frac{(r_1+r_2)^2-r_1r_2}{z^3}=\frac{1+z}{z^3},
\]
\[
r_1^{-4}+r_2^{-4}=\frac{r_1^4+r_2^4}{(r_1r_2)^4}=\frac{(r_1^2+r_2^2)^2-2(r_1r_2)^2}{z^4}
=\frac{(1+2z)^2-2z^2}{z^4}=\frac{1+4z+2z^2}{z^4},
\]
\[
\begin{split}
c(r_1^{-5}-r_2^{-5})\,&=\frac 1{r_1-r_2}\cdot\frac{r_2^5-r_1^5}{(r_1r_2)^5}\cr
&=\frac{-(r_1^4+r_1^3r_2+r_1^2r_2^2+r_1r_3^3+r_2^4)}{(-z)^5}\cr
&=\frac{(r_1^2+r_2^2)^2-r_1^2r_2^2+r_1r_2(r_1^2+r_2^2)}{z^5}\cr
&=\frac{(1+2z)^2-z^2-z(1+2z)}{z^5}\cr
&=\frac{1+3z+z^2}{z^5}.
\end{split}
\]
The final expression for $S$ follows from the above substitutions.
\end{proof} 


\end{document}